\newcommand{\dps}[1]  {\displaystyle{#1} }
\def\div{{\rm div \;}}
\def\R{\mathbb{R}}
\def\T{\mathbb{T}}
\def\Z{\mathbb{Z}}
\def\E{\mathbb{E}} 
\def\I{\mbox{Id}} 
\newtheorem{theorem}{Theorem}
\newtheorem{proposition}{Proposition}
\newtheorem{corollary}{Corollary}
\newtheorem{definition}{Definition}
\newtheorem{lemma}{Lemma}
\newtheorem{remark}{Remark}
\def\acknowledgement{\medskip {\bf {Acknowledgements} : }}
\title{Long-time convergence of an Adaptive Biasing Force method}
\author{Tony Lelièvre$^{(1,2)}$, Felix Otto$^{(3)}$, Mathias Rousset$^{(1,2)}$ and Gabriel Stoltz$^{(1,2)}$\\
\footnotesize{(1) CERMICS, Ecole Nationale des
  Ponts (ParisTech), 6 \& 8 Av. B. Pascal,
  77455 Marne-la-Vall\'ee, France.}\\
\footnotesize{(2) INRIA Rocquencourt, MICMAC project-team, B.P. 105, 78153 Le Chesnay Cedex, France.}\\
\footnotesize{(3) Institute for Applied Mathematics,
 University of Bonn, Wegelerstrasse 10, 53115 Bonn,
 Germany.} \\
\{lelievre,rousset,stoltz\}@cermics.enpc.fr ~~~ otto@iam.uni-bonn.de}
\begin{document}

\selectlanguage{english}

\maketitle

\abstract{We propose a proof of convergence of an adaptive method used in molecular dynamics to compute free energy profiles
  (see~\cite{darve-pohorille-01,henin-chipot-04,lelievre-rousset-stoltz-07-b}). Mathematically, it
  amounts to studying the long-time behavior of a stochastic process
  which satisfies a non-linear stochastic differential equation, where
  the drift depends on conditional expectations of some functionals of the process. We use
  entropy techniques to prove exponential convergence to the
  stationary state.}



\medskip

%
%

\section{Introduction}

In Section~\ref{sec:FED}, we introduce the physical context of this
work, namely molecular dynamics and the computation of free energy
differences in the canonical statistical ensemble. In Section~\ref{sec:ABF},
we introduce the adaptive dynamics we study and the main results we
prove are presented in Section~\ref{sec:pres_res}.

\subsection{Computations of free energy differences and metastability}\label{sec:FED}

Let us  consider the Gibbs-Boltzmann  measure 
\begin{equation}\label{eq:mu}
d\mu(q)=Z^{-1}\exp(-\beta V(q))  dq,
\end{equation}
where $q  \in  {\mathcal D}$,  $V:{\mathcal D}  \to \R$,  $Z=\int_{{\mathcal D}}
\exp(-\beta  V(q))  \,  dq$ and ${\mathcal D}=\{ q,\, V(q) < \infty \}$
is the configuration space. In  the  applications  we  consider,  $q$
represents the  position of~$N$ particles so that, in the following, ${\mathcal D}$ is
 an open subset (possibly the whole) of $\R^{n}$, with $n=3N$. All the results we prove are
 also satisfied if ${\mathcal D}$ is an open subset of $\T^{n}$ (where
 $\T=\R /  \Z$ denotes the  one-dimensional torus). The function~$V$ is the
energy  associated with  the positions  of the  particles and  $\beta$ is
proportional to the inverse of the temperature.  The probability measure
$\mu$ represents the equilibrium measure sampled by the particles in the
canonical  statistical ensemble.   A typical  dynamics  that can  be used  to
sample this measure is
\begin{equation}
\label{eq:Q}
dQ_t=- \nabla V(Q_t) \, dt + \sqrt{2 \beta^{-1}} dB_t,
\end{equation}
where $B_t$ is a $n$-dimensional standard Brownian motion. More
generally, for any smooth positive function $\gamma:{\mathcal D}  \to
\R_+^*$, the stochastic process $Q_t$ which satisfies
\begin{equation}
\label{eq:Q_prime}
dQ_t=- \nabla (V - \beta^{-1}\ln \gamma)(Q_t) \gamma(Q_t) \, dt
+ \sqrt{2 \beta^{-1} \gamma(Q_t)} dB_t
\end{equation}
samples the measure $\mu$.

Let  us introduce  a so-called {\em  reaction coordinate}  $\xi:{\mathcal D}  \to
{\mathcal M}$, with ${\mathcal M}=\R$ or ${\mathcal M}=\T$.   For a  given
configuration  $q$, $\xi(q)$  represents  a coarse-grained  information,
which is valuable from a  physical point of view. For instance, $\xi(q)$
may be a dihedral angle, for example to characterize the conformation of a molecule, in
which case ${\mathcal M}=\T$, or
the signed distance to an hypersurface of ${\mathcal D}$ (characterizing
a transition state), for example to  measure  the  evolution  of a chemical reaction, in
which case ${\mathcal M}=\R$. The function $\xi$ is therefore related  to some macroscopic
information of  the system.  Usually,  in~(\ref{eq:Q}), the time-scale
for the  dynamics on  $\xi(Q_t)$ is larger  than the time-scale  for the
dynamics on  $Q_t$ (due  to metastable states),  so that $\xi$  can also
be understood  as a function such  that $\xi(Q_t)$ is  a slow variable
compared to $Q_t$. 

In the following, we suppose that
\begin{equation*}
  \label{eq:hyp_xi}
  \text{{\bf [H1]}~~~$\xi$ is a smooth function such that $|\nabla \xi|>0$ on ${\mathcal D}$.}
\end{equation*}
Thus, the subsets $\Sigma_z=\{x \in {\mathcal D},\,  \xi(x)=z\}$ of
${\mathcal D}$ are smooth submanifolds of  co-dimension   one which
define a partition of ${\mathcal D}$:
$${\mathcal D}=\bigcup_{z \in {\mathcal M}} \Sigma_z \mbox{ and }
\Sigma_z \cap \Sigma_{z'}= \emptyset \mbox{ for $z \neq z'$}.$$  We denote  by
$\sigma_{\Sigma_z}$ the surface measure on $\Sigma_z$,
\emph{i.e.}  the Lebesgue measure on~$\Sigma_z$ induced by the
Lebesgue measure in the ambient space $\mathcal{D} \supset
\Sigma_z$. The submanifold~$\Sigma_z$ naturally has a (complete and locally compact) Riemannian structure induced by the Euclidean
structure of the ambient space~${\mathcal D}$.

The  image of the  measure $\mu$  by $\xi$  is
$\frac{\exp(-\beta  A(z))\, dz}{\int_{\mathcal M} \exp(-\beta  A(z))\, dz}$
where $A$ is the so-called {\em free energy} defined by:
\begin{equation}
\label{eq:A}
A(z)=-\beta^{-1} \ln (Z_{\Sigma_z})
\end{equation}
where
$$Z_{\Sigma_z}=\int_{\Sigma_z}  |\nabla \xi|^{-1} \exp(-\beta V) d
\sigma_{\Sigma_z}.$$
We assume henceforth that $\xi$ and $V$ are such that $Z_{\Sigma_z}<
\infty$. The free energy is actually defined up to an additive constant,
the quantity  $\exp(-\beta  A)$ being then defined up to a
multiplicative constant, which disappears in the normalization of the
probability measure $\frac{\exp(-\beta  A(z))\, dz}{\int_{\mathcal M} \exp(-\beta  A(z))\, dz}$.
Many algorithms in molecular dynamics~\cite{chipot-pohorille-07} aim to compute the
image of the  measure $\mu$  by $\xi$, which amounts to compute free
energy differences, namely quantities of the form $A(z)-A(z_0)$.  This
is typically obtained by computing (and then integrating) the derivative $A'(z)$, 
called the {\em mean force}. Using the co-area formula (see
Appendix~\ref{sec:co-area}), the following expression for $A'(z)$ can be
obtained (see~\cite{ciccotti-lelievre-vanden-einjden-06}, or the proof
of Lemma~\ref{lem:psi_bar_prime} below):
\begin{equation}
\label{eq:A_prime}
\boxed{
A'(z)=Z_{\Sigma_z}^{-1} \int_{\Sigma_z} F \, |\nabla
\xi|^{-1}  \exp(-\beta V) d \sigma_{\Sigma_z},
}
\end{equation}
where $F$ is the so-called {\em local mean force} defined by
\begin{equation}
\label{eq:F}
\boxed{
F=\left( \frac{\nabla V \cdot \nabla \xi}{|\nabla \xi|^{2}}
  - \beta^{-1} \div\left(\frac{\nabla \xi}{|\nabla \xi|^{2}}\right) \right).
}
\end{equation}

This can be rewritten in terms of conditional expectation as: For a
random variable~$X$ with law $\mu$,
\begin{equation}
\label{eq:A_prime_bis}
A'(z)=\E \left( F (X) \, \Big| \xi(X)=z \right).
\end{equation}

In practice, free energy profiles are used for example to
compare the likelihood of various conformations of a molecule, or to
compute the
rate of a chemical reaction. Free energy can also be useful to compute ensemble averages in
the canonical ensemble using the following formula (which is a conditioning formula): For any function $\phi :
{\mathcal D} \to \R$,
\begin{equation}\label{eq:moy_NVT}
\int_{\mathcal D} \phi \, d \mu =\frac{\displaystyle{\int_{\mathcal M} \int_{\Sigma_z} \phi \, d \mu_{\Sigma_z} \exp(-\beta A(z)) \, dz}}{\displaystyle{\int_{\mathcal M} \exp(-\beta A(z)) \, dz} },
\end{equation}
where $\mu_{\Sigma_z}$ is the probability measure $\mu$ conditioned to a
fixed value $z$ of the reaction coordinate:
\begin{equation}
d\mu_{\Sigma_z} = Z_{\Sigma_z}^{-1} |\nabla \xi|^{-1} \exp(-\beta V) d
  \sigma_{\Sigma_z}.
\end{equation}
Notice that~(\ref{eq:A_prime}) also writes $A'(z)=\int_{\Sigma_z} F \, d\mu_{\Sigma_z}$. Equation~(\ref{eq:moy_NVT}) may be interesting to compute averages in the canonical ensemble
 since, if
the reaction coordinate is well chosen, it is expected that the sampling
of the conditioned probability measure $\mu_{\Sigma_z}$ is easier than
the sampling of $\mu$ (the metastable features of the measure $\mu$
being mostly in the direction of the reaction coordinate $\xi$). The
sampling of $\mu_{\Sigma_z}$ can be done for example by projection of the gradient
dynamics on $\Sigma_z$
(see~\cite{ciccotti-lelievre-vanden-einjden-06}). The quantity $\int_{\Sigma_z}
\phi \, d \mu_{\Sigma_z}$ can thus be evaluated by an efficient Monte Carlo
procedure, and the computation of $\int_{\mathcal D} \phi \, d \mu$
through~(\ref{eq:moy_NVT}) then only requires a
one-dimensional integration, and the computation of the free energy (up
to an additive constant).

Due to the  high dimensionality of the problem  (the number of particles
$N$  is usually  very large),  methods to  compute mean  forces  or free
energy differences are  Monte Carlo methods. They typically  rely on the
simulation   of  a  diffusion Markov   process.  The   most  recent   methods  use
non-homogeneous or  non-linear Markov processes.  Classical examples are
exponential  reweighting of  non-equilibrium paths  (based upon the
so-called Jarzynski equality,
see~\cite{jarzynski-97,lelievre-rousset-stoltz-07-a})      or     adaptive
methods
(see~\cite{darve-pohorille-01,henin-chipot-04,iannuzzi-laio-parrinello-03,wang-landau-01}). 

We are interested here in adaptive methods to  compute free
energy differences, and more precisely
Adaptive Biasing
Force  techniques  (see~\cite{darve-pohorille-01,henin-chipot-04}).  The
principle of adaptive methods is  to modify the potential $V$ during the
simulation, in  order to  remove the metastable  features of  the simple
dynamics~(\ref{eq:Q}),  while approximating  the free  energy~$A$. Many
methods       have      been       proposed      and       we      refer
to~\cite{lelievre-rousset-stoltz-07-b}  for a  unified presentation  of these
techniques, as well as a discussion of efficient parallel implementations.
The aim of this paper is to propose a mathematical study of the
Adaptive Biasing Force method to give a rigorous formulation and proofs of the
following statements (which are the main arguments of practitioners
of the field to advocate the use of adaptive methods):
\begin{itemize}
\item[{\bf [S1]}] The adaptive biasing force technique helps to remove the
  metastable features of the  simple dynamics~(\ref{eq:Q}), and thus
  enables efficient exploration of the configuration space.
\item[{\bf [S2]}] With the adaptive biasing force technique, the free  energy  $A$
  is obtained in the longtime limit, and the convergence is
  exponentially fast in time.
\end{itemize}

\subsection{An Adaptive Biasing Force technique}\label{sec:ABF}

The     Adaptive     Biasing      Force (ABF)     method     was     introduced
in~\cite{darve-pohorille-01,henin-chipot-04} and is recast in a general
mathematical framework in~\cite{lelievre-rousset-stoltz-07-b}.  We propose to  study here
one version of this method, applied to the context of Brownian (or
overdamped Langevin)
dynamics\footnote{Such methods can also be applied for other dynamics,
  like Langevin dynamics. We only consider
  Brownian dynamics in this paper.}.

The ABF dynamics we propose to study is the
following non-linear stochastic differential equation:
\begin{equation}\label{eq:X}
\boxed{
\begin{array}{l}
dX_t= -  \nabla \Big(V - A_t \circ \xi +W \circ \xi - \beta^{-1} \ln (|\nabla \xi|^{-2}) 
 \Big)(X_t) \, |\nabla \xi|^{-2}(X_t)
\,dt  \\[5pt]
\phantom{dX_t=} + \sqrt{2 \beta^{-1}}  |\nabla \xi|^{-1} (X_t) dB_t,
\end{array}
}
\end{equation}
where $W$ is an additional well-chosen potential that we will define below and $A_t$ is the
``free energy observed at time $t$''. More precisely, the
derivative of $A_t$ with respect to the reaction coordinate is defined
as (compare with~(\ref{eq:A_prime_bis})):
$\forall z \in {\mathcal M}$,
\begin{equation}\label{eq:A_prime_t}
\boxed{
  A_t'(z)=\E\left(F(X_t) \, \Big| \xi(X_t)=z \right),
}
\end{equation}
where $F$ is defined by~(\ref{eq:F}). With a slight abuse of
terminology, the function $A_t'$ is called the
{\em biasing force}.
Notice that here and in the following, the notation $'$ denotes a derivative with
respect to the reaction coordinate values, while the notation $\circ$
denotes the composition operator. Equation~(\ref{eq:A_prime_t})
defines $A_t$ up to an additive (time-dependent) constant, which does not modify~(\ref{eq:X}). 

Compared to the simple dynamics~(\ref{eq:Q}), three modifications have
been made to obtain~\eqref{eq:X}--\eqref{eq:A_prime_t}:
\begin{enumerate}
\item First and foremost, the potential $V$ has been changed
to the biasing potential $V-A_t \circ \xi$. This is the bottom line of
the adaptive strategy. The algorithm we study here is prototypical of many adaptive methods used in molecular dynamics
(see~\cite{lelievre-rousset-stoltz-07-b}). In the original Adaptive Biasing Force technique as presented
in~\cite{darve-pohorille-01,henin-chipot-04}, the conditional
expectation~\eqref{eq:A_prime_t} is actually ``approximated'' by some
conditional averages over one single trajectory. The dynamics we study
here is not clearly related with such a discretization, but rather with a
discretization of~\eqref{eq:A_prime_t} using an interacting particle system, where
many replicas of the system contribute to the free energy profile
(see~\cite{lelievre-rousset-stoltz-07-b}).
\item Second, a potential $W \circ \xi$ has been added. This is actually
  needed only in the case when $\mathcal{M}$ is an unbounded domain  (we
  recall that $\mathcal{M}$ is the domain where the reaction coordinate
  lives). In theses cases, $W$ is chosen so that the law of $\xi(X_t)$
  converges exponentially fast to its longtime limit (more precisely, the Fisher information
  associated with this law converges exponentially fast to zero, see~[H4] below for a more detailed
  statement). Besides, from a numerical point of view, such a potential
  is sometimes used in practice in order to separately sample some parts of the reaction coordinate
  space ${\mathcal M}$ (as in stratified sampling strategies).
\item Third, some
terms depending on $|\nabla \xi|$ have been introduced. This modification is made in order to obtain a simple diffusive behavior for
the law of~$\xi(X_t)$ (see Proposition~\ref{prop:psi_xi} below).  It is expected that
the longtime convergence of $A_t'$ towards $A'$ still holds without this
modification, by simply considering the gradient dynamics 
\begin{equation}\label{eq:original}
dX_t=- \nabla
(V - A_t \circ \xi + W \circ \xi)(X_t) \, dt + \sqrt{2 \beta^{-1}}
dB_t,
\end{equation}
with the same definition~(\ref{eq:A_prime_t}) for $A_t'$.
However, we are only able to prove a weaker convergence result in
this case. This is the matter of Sections~\ref{sec:original} and~\ref{sec:proof_original}. Notice that
if $|\nabla \xi|$ is constant (for example if $\xi$ is a length), a
simple change of time relates~\eqref{eq:original}
with~\eqref{eq:X}. Notice also that if we take $A_t=W=0$
in~(\ref{eq:X}), then $X_t$ samples the original Gibbs measure
$\mu$ defined by~(\ref{eq:mu}) (see Equation~(\ref{eq:Q_prime}) above).
\end{enumerate}

\begin{remark}[On the computation of $A_t'(z)$]
From a practical point of view, with the additional terms
mentioned in item 3 above, it is
possible to compute the biasing force $A_t'(z)$ without explicitly
evaluating $F$ since (by It\^o's calculus on $X_t$ that satisfies~(\ref{eq:X}),
and assuming $W=0$ for simplicity)
\begin{equation}\label{eq:SDE_xi}
F(X_t) \, dt = d \xi (X_t) + A_t'(\xi(X_t))\, dt - \sqrt{2
\beta^{-1}}  \frac{\nabla \xi}{|\nabla \xi|} (X_t) \cdot dB_t.
\end{equation}
By a simple finite difference scheme, we thus have the following approximation
\begin{equation*}
F(X_{t_{n+1}}) \simeq A_{t_n}'(\xi(X_{t_n})) + \frac{ \xi (X_{t_{n+1}}) - \xi(X_{t_n})  - \sqrt{2
\beta^{-1}}  \frac{\nabla \xi}{|\nabla \xi|} (X_{t_n}) \cdot (B_{t_{n+1}} - B_{t_n})}{\Delta t}.
\end{equation*}
\end{remark}

\subsection{A PDE formulation and presentation of the main result}\label{sec:pres_res}

We would like to emphasize that our arguments are
partially {\em formal}: we assume that we are given a process $X_t$ and a function
$A_t'$ which satisfy~(\ref{eq:X})--(\ref{eq:A_prime_t}), and such that
$X_t$ has a smooth density $\psi(t,\cdot)$ with respect to the Lebesgue
measure on ${\mathcal D}$. We suppose that this density is 
sufficiently regular so that the computations are valid. In particular, we
assume that the potential $V$ is such that either the stochastic process
$X_t$ lives in~${\mathcal D}$ and thus that
its density $\psi(t,\cdot)$ decays sufficiently fast on $\partial
{\mathcal D}$ or the stochastic process $X_t$ has some reflecting
behavior on $\partial {\mathcal D}$ and thus that
its density $\psi(t,\cdot)$ has zero normal derivatives on $\partial
 {\mathcal D}$. In both cases, no boundary terms appear in the integrations by
parts we perform to derive the entropy estimates. We refer for example
to~\cite{arnold-markowich-toscani-unterreiter-01} for an appropriate
functional framework in which such entropy estimates hold.

Since only the law of the process $X_t$ at a fixed time $t$ is used
in~(\ref{eq:A_prime_t}), it is possible to recast the dynamics in the
following nonlinear partial differential equation (PDE) on the density $\psi(t,\cdot)$ of $X_t$:
\begin{equation}\label{eq:EDP}
\boxed{
\left\{
\begin{array}{l}
\dps{\partial_t \psi=\div\left(|\nabla \xi|^{-2} \left(\nabla
      (V-A_t\circ \xi + W \circ \xi) \psi + \beta^{-1}
  \nabla \psi \right)\right),}\\
A_t'(z)= \frac{ \dps{\int_{\Sigma_z} F |\nabla \xi|^{-1} \psi(t,\cdot) d\sigma_{\Sigma_z}}}{\dps{\int_{\Sigma_z} |\nabla \xi|^{-1} \psi(t,\cdot)  d\sigma_{\Sigma_z}}},
\end{array}
\right.
}
\end{equation}
where $F$ is defined by~(\ref{eq:F}).
This is obtained by using the fact that if $X_t$ has law
$\psi(t,x)\,dx$, then the law of $\xi(X_t)$ is $\psi^\xi(t,z)\,dz$
with
\begin{equation}\label{eq:psi_xi}
\psi^\xi(t,z)=\int_{\Sigma_z} |\nabla \xi|^{-1} \psi(t,\cdot)
d\sigma_{\Sigma_z},
\end{equation}
and the conditional law of $X_t$ with respect to
$\xi(X_t)=z$ is $\mu_{t,z}$ defined by
\begin{equation}\label{eq:mu_t_z}
d\mu_{t,z}=\frac{\psi(t,\cdot) |\nabla \xi|^{-1} d
  \sigma_{\Sigma_z}}{\psi^\xi(t,z)}.
\end{equation}
The probability measure $\psi^\xi(t,z)\,dz$ is
the image of the probability measure $\psi(t,x)\, dx$  by $\xi$. These expressions can be obtained using the co-area formula (see Appendix~\ref{sec:co-area}).

Before presenting the results, we would like to motivate
the introduction of this dynamics by the following formal observation. If the
potential~$A_t$ and the law of~$X_t$ reach a stationary state, then,
from the dynamics~(\ref{eq:X}) on $X_t$ (or from the partial differential
equation~(\ref{eq:EDP}) satisfied
by the distribution of $X_t$), we observe that this stationary law is proportional to $\exp(-\beta (V(x)-A_\infty
\circ \xi(x) + W \circ \xi(x)))\, dx$, where $A_\infty$ denotes the stationary state for
$A_t$ (this requires a uniqueness result for the law of~$X_t$, which holds for
example if
$|\nabla \xi|$ is uniformly bounded from below by a positive
constant). Then, from the definition~(\ref{eq:A_prime_t}) of the biasing
force, we obtain that,
necessarily, $A_\infty'=A'$ (where $A'$ is the mean force defined by~(\ref{eq:A_prime})). This proves the uniqueness of the stationary
state for this dynamics. We
can thus expect that $A_t'$ converges to the mean force $A'$ in the
longtime limit.

The interest  of the dynamics~(\ref{eq:X})--(\ref{eq:A_prime_t})  is actually twofold. First,  as expected
from the formal argument above,  in the longtime limit, $A_t'$ converges
to the mean force $A'$ defined by~(\ref{eq:A_prime}) (see
Equation~(\ref{eq:CV_MF_2}) below).  Second, using the
ABF method, the law of $\xi(X_t)$ has a simple diffusive behavior (see
Equation~(\ref{eq:EDP_psi_xi}) below).  The
metastable feature  of the  simple dynamics~(\ref{eq:Q}) along  $\xi$ is
thus corrected by the addition of the adaptive potential $A_t$.  The aim
of  this   paper  is  to  give   a  precise  statement   for  these  two
assertions, which are mathematical formalizations of the two main
characteristics  [S1] and [S2] of adaptive techniques mentioned in Section~\ref{sec:FED}. The proof of the longtime convergence relies on entropy
techniques, and  requires  appropriate assumptions on  the potentials
$V$, $W$ and the reaction coordinate $\xi$. We prove that under suitable
assumptions, the  convergence of $A_t'$  to $A'$ is  exponentially fast,
with a  rate of  convergence limited, at  the macroscopic level,  by the
rate of convergence of the law of $\xi(X_t)$ to its longtime limit,  and, at  the  microscopic level,  by the rate of convergence
to the equilibrium  conditioned probability measures $\mu_{\Sigma_z}$,
for all values $z$ of the reaction coordinate.

All these results are more precisely stated in Section~\ref{sec:res},
and the proofs are given in Section~\ref{sec:proof}. We would like to
mention that the main arguments of the proof are
given in a very simple case in Section~\ref{sec:proof_x} and that we
also present a result of convergence for the dynamics~(\ref{eq:original})--(\ref{eq:A_prime_t}) in Section~\ref{sec:original}.

\section{Precise statements of the results}\label{sec:res}

In Section~\ref{sec:entropy}, we recall some well-known results on
entropy and introduce the main notation used in the following to state
the convergence result. Section~\ref{sec:CV} is devoted to the
presentation of the convergence result for the
dynamics~(\ref{eq:X})--(\ref{eq:A_prime_t}). Finally, we give in
Section~\ref{sec:original} a (weaker) convergence result for the dynamics~(\ref{eq:original})--(\ref{eq:A_prime_t}).

\subsection{Entropy and Fisher information}\label{sec:entropy}

Let us consider $\psi$ and $A_t'$ which satisfy~(\ref{eq:EDP}) and let
 introduce the long-time limit of $\psi$,  $\psi^\xi$ (defined
by~(\ref{eq:psi_xi})) and $\mu_{t,z}$ (defined by~(\ref{eq:mu_t_z})):
$$\psi_\infty=(Z Z^\xi)^{-1}\exp(-\beta(V-A\circ \xi + W \circ \xi)),$$
$$\psi^\xi_\infty(z)=(Z^\xi)^{-1}\exp(-\beta W(z)),$$
$$d\mu_{\infty,z}=d\mu_{\Sigma_z}=Z_{\Sigma_z}^{-1}\exp(-\beta V) |\nabla \xi|^{-1} d
\sigma_{\Sigma_z},$$
where
$$Z^\xi=\int_{\mathcal M} \exp(-\beta W(z))\, dz.$$
We recall that
$$Z_{\Sigma_z}=\int_{\Sigma_z}  |\nabla \xi|^{-1} \exp(-\beta V) d
\sigma_{\Sigma_z},\, Z=\int_{{\mathcal D}} \exp(-\beta  V(x))  \,  dx.$$
Notice that $\int_{{\mathcal D}}  \psi_\infty=1$, and that the probability measure $\psi^\xi_\infty(z)\,dz$ is
the image of the probability measure $\psi_\infty(x)\, dx$  by $\xi$. 

In order to state the results, we also need to introduce the following
projection operators. For any $x \in {\mathcal D}$, we denote by
 $$P(x)=\I-\frac{\nabla \xi \otimes \nabla
  \xi}{|\nabla \xi|^2}(x)$$
the orthogonal projection operator onto the tangent space $T_x
\Sigma_{\xi(x)}$ to $\Sigma_{\xi(x)}$ at point~$x$, and by
$$Q(x)=\frac{\nabla \xi \otimes \nabla
  \xi}{|\nabla \xi|^2}(x)$$
the orthogonal projection operator onto the normal space $N_x
\Sigma_{\xi(x)}$ to $\Sigma_{\xi(x)}$ at point~$x$. We denote by
$\otimes$ the tensor product: For two vectors $u,v \in {\mathcal D}$, $u
\otimes v$ is a $n \times n$ matrix with components $(u
\otimes v)_{i,j}=u_i v_j$.

We measure the ``distance'' between $\psi$ (respectively
$\psi^\xi$) and $\psi_\infty$ (respectively
$\psi^\xi_\infty$) using the relative entropy $H(\psi |
\psi_\infty)$ (respectively $H(\psi^\xi |
\psi^\xi_\infty)$), where, for any two probability measures $\mu$
and $\nu$ such that $\mu$ is absolutely continuous with respect to $\nu$
(this property being denoted $\mu \ll \nu$ in the following),
$$H(\mu | \nu)=\int \ln \left(\frac{d\mu}{d\nu} \right) d \mu.$$
We recall the Csiszar-Kullback inequality:
\begin{equation}\label{eq:CK}
\|\mu-\nu \|_{TV} \leq \sqrt{2 H(\mu | \nu)}
\end{equation}
where $\|\mu-\nu\|_{TV}=\sup_{f,\, \|f\|_{L^\infty}\leq 1} \{ \int f
d(\mu - \nu) \}$ denotes the total variation norm of the signed measure
$\mu-\nu$. When $\mu$ and $\nu$ both have densities with respect to the
Lebesgue measure, $\|\mu-\nu\|_{TV}$ is simply the $L^1$ norm of the
difference between the two densities.

We denote the {\em total entropy} by $$E(t)=H(\psi(t,\cdot) | \psi_\infty),$$
the {\em macroscopic entropy} by $$E_M(t)=H(\psi^\xi(t,\cdot) |
\psi^\xi_\infty),$$ the ``local entropy'' at a
fixed value $z$ of the reaction coordinate by
$$e_m(t,z)=H( \mu_{t,z} | \mu_{\infty,z} )= \int_{\Sigma_z} \ln\left( \frac{\psi(t,\cdot)}{
\psi^\xi(t,z)} \Big/ \frac{\psi_\infty }{
\psi^\xi_\infty(z)}\right) \frac{\psi(t,\cdot) |\nabla \xi|^{-1} d
\sigma_{\Sigma_z}}{\psi^\xi(t,z)},$$
and the {\em microscopic entropy} by
$$E_{m}(t)=\int_{\mathcal M} e_m(t,z)
\psi^\xi(t,z) \, dz.$$
It is straightforward to obtain the
following result which can be seen as the extensivity of the
entropy:
\begin{lemma}\label{lem:ext_ent}
It holds $$E(t)=E_M(t)+E_{m}(t).$$
\end{lemma}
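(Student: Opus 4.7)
The proof I would write is a short, direct calculation based on the disintegration of the joint law into its marginal along $\xi$ and its conditional on the level sets. Since all three entropies are integrals of logarithms of Radon-Nikodym derivatives, the identity ultimately comes from factoring a ratio of densities into a ratio of conditionals times a ratio of marginals, and then applying the tower property built into the co-area formula.

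More concretely, the first step is to rewrite $E(t)$ using the co-area formula (see Appendix~\ref{sec:co-area}) as
\begin{equation*}
E(t)=\int_{{\mathcal M}}\int_{\Sigma_z}\ln\!\left(\frac{\psi(t,\cdot)}{\psi_\infty}\right) \psi(t,\cdot)\,|\nabla\xi|^{-1}\,d\sigma_{\Sigma_z}\,dz.
\end{equation*}
The second step is algebraic: for each $z\in\mathcal{M}$, multiply and divide by the marginals $\psi^\xi(t,z)$ and $\psi^\xi_\infty(z)$ to get
\begin{equation*}
\frac{\psi(t,\cdot)}{\psi_\infty}=\left(\frac{\psi(t,\cdot)/\psi^\xi(t,z)}{\psi_\infty/\psi^\xi_\infty(z)}\right)\cdot\frac{\psi^\xi(t,z)}{\psi^\xi_\infty(z)},
\end{equation*}
and take the logarithm so that it splits into two terms.

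The third step is to plug this splitting back into the co-area expression for $E(t)$. Using the definition~(\ref{eq:mu_t_z}) of $\mu_{t,z}$, the inner surface integral of the first (``conditional'') logarithm against $\psi(t,\cdot)|\nabla\xi|^{-1}d\sigma_{\Sigma_z}$ equals $\psi^\xi(t,z)\,e_m(t,z)$, whose integral over $\mathcal{M}$ is $E_m(t)$ by definition. For the second (``marginal'') logarithm, which depends only on $z$, the inner integral of $\psi(t,\cdot)|\nabla\xi|^{-1}d\sigma_{\Sigma_z}$ gives precisely $\psi^\xi(t,z)$ by~(\ref{eq:psi_xi}), and integration in $z$ yields $E_M(t)$.

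There is no real obstacle here; the only point requiring care is that both $\psi^\xi_\infty$ and $\psi^\xi(t,\cdot)$ are strictly positive on $\mathcal{M}$ (so the conditional densities $\psi_\infty/\psi^\xi_\infty$ and $\psi(t,\cdot)/\psi^\xi(t,\cdot)$ are well defined), and that the implicit regularity/decay assumptions on $\psi(t,\cdot)$ mentioned in Section~\ref{sec:pres_res} ensure the Fubini-type interchange justified by the co-area formula. Once these points are granted, the identity $E(t)=E_M(t)+E_m(t)$ drops out in one line.
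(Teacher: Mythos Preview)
Your argument is correct and is exactly the natural (and essentially only) way to prove this identity: apply the co-area formula to $E(t)$, factor $\psi/\psi_\infty$ into the product of the conditional ratio and the marginal ratio, and use~(\ref{eq:psi_xi}) to recover $E_M(t)$ and $E_m(t)$. The paper itself does not spell out a proof---it simply declares the result ``straightforward''---so your write-up is precisely the computation the authors have in mind.
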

Let us now introduce the Fisher information: For any two probability measures $\mu$
and~$\nu$ such that $\mu \ll \nu$, 
\begin{equation}\label{eq:fisher}
I(\mu | \nu)=\int
\left|\nabla \ln \left(\frac{d\mu}{d\nu} \right)\right|^2 d \mu.
\end{equation}
In the case $\nu$ is a probability measure on the (Riemannian) submanifold
$\Sigma_z$, $\nabla$ actually denotes the gradient on~$\Sigma_z$
in~(\ref{eq:fisher}), namely
\begin{equation}\label{eq:grad_surf}
\nabla_{\Sigma_z}=P \nabla.
\end{equation}
Therefore, for the conditional probability measures $\mu_{t,z}$ and
$\mu_{\infty,z}$, the Fisher information writes
$$I(\mu_{t,z} | \mu_{\infty,z})=\int_{\Sigma_z}
\left|\nabla_{\Sigma_z} \ln\left( \frac{\psi(t,\cdot)}{\psi_\infty}
\right)\right|^2 \frac{\psi(t,\cdot) |\nabla
\xi|^{-1} d \sigma_{\Sigma_z}}{\psi^\xi(t,z)}.$$
Let us finally introduce another way to compare two probability
measures, namely the  Wasserstein distance with quadratic cost: for
two probability measures $\mu$ and $\nu$ defined on a Riemannian manifold $\Sigma$,
$$W(\mu,\nu)=\sqrt{\inf_{\pi \in \Pi(\mu,\nu)} \int_{\Sigma \times
    \Sigma} d_\Sigma(x,y)^2 \, d\pi(x,y)}.$$
In this expression, $d_\Sigma$ denotes the geodesic distance on $\Sigma$: $\forall x,y \in \Sigma$,
$$d_\Sigma(x,y)=\inf \left\{ \sqrt{\int_0^1 |\dot{w}(t)|^2 \, dt} \,
  \Bigg| \, w \in
  {\mathcal C}^1([0,1],\Sigma),\, w(0)=x,\, w(1)=y \right\},$$
where $\Pi(\mu,\nu)$
denotes the set of coupling probability measures, namely probability measures on
$\Sigma \times \Sigma$ such that their marginals are $\mu$ and $\nu$.
We need the following definitions:
\begin{definition}\label{def:LSI}
The probability measure $\nu$ is said to satisfy a logarithmic Sobolev inequality
with constant $\rho>0$ (in short: LSI($\rho$)) if for all probability
measures $\mu$ such that $\mu \ll \nu$,
$$H(\mu|\nu) \leq \frac{1}{2 \rho} I (\mu | \nu).$$
\end{definition}
\begin{definition}
The probability measure $\nu$ is said to satisfy a Talagrand inequality 
with constant $\rho>0$ (in short: T($\rho$)) if for all probability
measures $\mu$ such that $\mu \ll \nu$,
$$W(\mu,\nu) \leq \sqrt{\frac{2}{\rho} H (\mu | \nu)}.$$
\end{definition}
In the latter definition, we implicitly assume that the probability measures have
finite moments of order~2. This will always be the case for all the
probability measures we consider. We will need the following important
result (see~\cite[Theorem 1]{otto-villani-00}).
\begin{lemma}\label{lem:otto_villani}
If $\nu$ satisfies LSI($\rho$), then $\nu$ satisfies T($\rho$).
\end{lemma}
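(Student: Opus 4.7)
The plan is to realize $\nu$ as the long-time limit of a gradient flow starting at $\mu$ and to compare the rate of entropy dissipation (controlled by LSI) with the Wasserstein ``speed'' along this flow. Concretely, I take $\mu_t$ to be the solution of the Fokker-Planck equation with invariant measure $\nu$,
\begin{equation*}
\partial_t \mu_t = \nabla \cdot \left( \mu_t \, \nabla \ln \frac{\mu_t}{\nu}\right), \qquad \mu_0 = \mu,
\end{equation*}
which, under standard regularity, converges to $\nu$ as $t\to\infty$. Without loss of generality, I may first assume $\mu$ is smooth, compactly supported and bounded away from $0$ on its support, and recover the general case by approximation at the end (using lower semi-continuity of $W$ and $H$).

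The first ingredient is the de Bruijn-type entropy dissipation identity
\begin{equation*}
\frac{d}{dt} H(\mu_t | \nu) = - I(\mu_t | \nu),
\end{equation*}
obtained by differentiating under the integral and integrating by parts. Combined with the hypothesis LSI($\rho$), i.e.\ $I(\mu_t|\nu) \geq 2\rho \, H(\mu_t|\nu)$, this yields the Gronwall estimate $H(\mu_t|\nu) \leq e^{-2\rho t} H(\mu|\nu)$, and in particular $H(\mu_t|\nu) \to 0$.

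The second, and more delicate, ingredient is an ``Otto calculus'' speed estimate: viewing the Fokker-Planck flow as the continuity equation $\partial_t \mu_t + \nabla\cdot(\mu_t v_t) = 0$ with velocity $v_t = -\nabla \ln(\mu_t/\nu)$, the Benamou-Brenier representation of $W$ implies
\begin{equation*}
\limsup_{h \to 0^+} \frac{W(\mu_t, \mu_{t+h})}{h} \leq \left( \int |v_t|^2 \, d\mu_t \right)^{1/2} = \sqrt{I(\mu_t|\nu)}.
\end{equation*}
Together with the triangle inequality for $W$ and the convergence $\mu_t \to \nu$, this gives $W(\mu,\nu) \leq \int_0^\infty \sqrt{I(\mu_t|\nu)} \, dt$.

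To close the loop, I rewrite
\begin{equation*}
\sqrt{I(\mu_t|\nu)} = \frac{I(\mu_t|\nu)}{\sqrt{I(\mu_t|\nu)}} \leq \frac{I(\mu_t|\nu)}{\sqrt{2\rho \, H(\mu_t|\nu)}} = -\sqrt{\frac{2}{\rho}} \, \frac{d}{dt} \sqrt{H(\mu_t|\nu)},
\end{equation*}
using LSI($\rho$) in the inequality and the entropy dissipation identity in the last step. Integrating over $t \in [0,\infty)$ and using $H(\mu_t|\nu) \to 0$ yields $W(\mu,\nu) \leq \sqrt{(2/\rho)\, H(\mu|\nu)}$, which is T($\rho$). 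The main obstacle, and the technical heart of the argument, is the rigorous justification of the Benamou-Brenier speed bound for the Fokker-Planck flow (the regularity of $\mu_t$, the admissibility of the velocity field $v_t$ as a competitor in the Benamou-Brenier infimum, and the approximation step to pass from smooth $\mu$ to the general case); the two other pieces, entropy dissipation and the Gronwall-type integration, are elementary once this is in place.
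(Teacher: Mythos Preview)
Your argument is correct and is essentially the original proof from Otto--Villani~\cite{otto-villani-00}. Note, however, that the paper does not give its own proof of this lemma: it simply states it and refers to \cite[Theorem~1]{otto-villani-00}. So there is nothing to compare against beyond that citation; you have supplied (a sketch of) the cited proof rather than an alternative one.

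One remark worth making: in this paper the lemma is invoked for the conditional measures $\mu_{\infty,z}$ living on the Riemannian submanifolds $\Sigma_z$ (see the proof of Lemma~\ref{lem:estim_1} and Appendix~\ref{sec:rieman}), not on a flat Euclidean space. Your write-up is implicitly Euclidean (Fokker--Planck on $\R^n$, Benamou--Brenier in flat space). The argument does carry over to the Riemannian setting---this is exactly what \cite{otto-villani-00} does---but the ``technical heart'' you flag (regularity of the heat/Fokker--Planck flow, admissibility of $v_t$ in the Benamou--Brenier formula, approximation of $\mu$) must then be carried out on a complete Riemannian manifold, with the intrinsic gradient, geodesic distance, and volume measure. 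If you want your proof to stand on its own for the use made of it here, you should either state it in that generality or at least note that the same scheme works on $(\Sigma_z, g)$ for both Riemannian structures considered in the paper.
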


For an introduction to logarithmic Sobolev inequalities, their
properties and their relation to longtime behavior of solutions to PDEs, we refer
to~\cite{ABC-00,arnold-markowich-toscani-unterreiter-01,villani-03}.

\subsection{Convergence of the adaptive dynamics~(\ref{eq:X})--(\ref{eq:A_prime_t})}\label{sec:CV}

We are now in position to state our main results. Concerning the
dynamics on the law of $\xi(X_t)$, we have:
\begin{proposition}[Equation satisfied by the marginal density $\psi^\xi$]\label{prop:psi_xi}
Let $(\psi,A_t')$ be a smooth solution to~(\ref{eq:EDP}) and let us
assume~[H1]. Then $\psi^\xi$ satisfies the following equation:
\begin{equation}\label{eq:EDP_psi_xi}
\partial_t \psi^\xi =  \partial_{z} \left(W'  \psi^\xi + \beta^{-1}
  \partial_{z}\psi^\xi  \right)
\text{ on ${\mathcal M}$.}
\end{equation}
\end{proposition}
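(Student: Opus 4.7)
The plan is to derive the equation for $\psi^\xi$ in weak form by testing the PDE for $\psi$ against functions that only depend on $\xi$. Concretely, for any smooth compactly supported test function $\varphi:\mathcal{M}\to\R$, I would multiply the first equation in~\eqref{eq:EDP} by $\varphi\circ\xi$, integrate over $\mathcal{D}$, and integrate by parts (the boundary terms vanishing under the standing assumptions on $\psi$ at $\partial\mathcal{D}$). Using the co-area formula~\eqref{eq:psi_xi}, the left-hand side becomes $\partial_t\int_{\mathcal M}\varphi\,\psi^\xi\,dz$, so the proposition is equivalent to showing that the right-hand side equals $\int_{\mathcal M}\varphi(z)\,\partial_z(W'\psi^\xi+\beta^{-1}\partial_z\psi^\xi)\,dz$.

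The core computation uses $\nabla(\varphi\circ\xi)=\varphi'(\xi)\nabla\xi$ and the identities $\nabla(A_t\circ\xi)=A_t'(\xi)\nabla\xi$, $\nabla(W\circ\xi)=W'(\xi)\nabla\xi$, which produce the nice cancellation $|\nabla\xi|^{-2}\nabla\xi\cdot\nabla(A_t\circ\xi)=A_t'(\xi)$ (and similarly for $W$). After integration by parts, the spatial integral reduces to
\[
-\int_{\mathcal D}\varphi'(\xi)\!\left[\frac{\nabla V\cdot\nabla\xi}{|\nabla\xi|^2}\psi -A_t'(\xi)\psi+W'(\xi)\psi+\beta^{-1}\frac{\nabla\xi\cdot\nabla\psi}{|\nabla\xi|^2}\right]\!dx.
\]
The trick is then to recombine the $V$-term with the gradient-of-$\psi$ term using $\frac{\nabla\xi\cdot\nabla\psi}{|\nabla\xi|^2}=\div\!\left(\psi\frac{\nabla\xi}{|\nabla\xi|^2}\right)-\psi\,\div\!\left(\frac{\nabla\xi}{|\nabla\xi|^2}\right)$, so that the definition~\eqref{eq:F} of $F$ shows up and the bracket becomes $F\psi+\beta^{-1}\div(\psi\nabla\xi/|\nabla\xi|^2)-A_t'(\xi)\psi+W'(\xi)\psi$.

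The cancellation that makes the method work is now transparent: integrating the divergence term against $\varphi'(\xi)$ by parts yields $-\int_{\mathcal D}\beta^{-1}\varphi''(\xi)\,\psi\,dx=-\beta^{-1}\int_{\mathcal M}\varphi''\psi^\xi\,dz$, while applying the co-area formula and the definition~\eqref{eq:A_prime_t} of $A_t'$ gives
\[
\int_{\mathcal D}\varphi'(\xi)\,F\,\psi\,dx=\int_{\mathcal M}\varphi'(z)\,A_t'(z)\,\psi^\xi(t,z)\,dz,
\]
which cancels exactly the $A_t'$ contribution. What is left is $\int_{\mathcal M}(\beta^{-1}\varphi''-\varphi'W')\psi^\xi\,dz$, and this is the weak formulation of~\eqref{eq:EDP_psi_xi} after one more integration by parts in $z$.

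The only genuinely delicate step, and what I would flag as the main obstacle, is the clean rewriting of $\frac{\nabla V\cdot\nabla\xi}{|\nabla\xi|^2}\psi+\beta^{-1}\frac{\nabla\xi\cdot\nabla\psi}{|\nabla\xi|^2}$ into a term proportional to $F\psi$ plus a pure divergence, because this is what both makes $F$ (and hence $A_t'$) appear and allows the divergence to be cleared by integration by parts against $\varphi'(\xi)$. Apart from this algebraic identity (which is essentially the same manipulation used to derive formula~\eqref{eq:A_prime} via the co-area formula), the rest is bookkeeping; boundary contributions either vanish (reflecting/decay hypotheses on $\partial\mathcal D$) or do not exist (periodic case), and passing from the weak form to the pointwise form~\eqref{eq:EDP_psi_xi} follows from the smoothness assumption on $\psi$.
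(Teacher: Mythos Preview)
Your proposal is correct and follows essentially the same route as the paper's proof (Lemma~\ref{lem:psi_bar}): test the PDE against $\varphi\circ\xi$, integrate by parts, and use the definition of $A_t'$ via the co-area formula to produce the crucial cancellation with the $F$-term. The only organizational difference is that where you perform a second integration by parts on $\div(\psi\,\nabla\xi/|\nabla\xi|^{2})$ to land directly on $\varphi''(\xi)\psi$, the paper instead invokes its Lemma~\ref{lem:psi_bar_prime} (the precomputed formula for $\partial_z\psi^\xi$); note also a harmless sign slip in your intermediate line---the divergence term actually yields $+\beta^{-1}\int_{\mathcal M}\varphi''\psi^\xi\,dz$, which is consistent with the final expression you state.
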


\begin{remark}
Notice that even if $\psi^\xi$ satisfies a closed PDE, $\xi(X_t)$ does
not satisfy a closed SDE (see Equation~(\ref{eq:SDE_xi}) above).
\end{remark}

The fundamental assumptions we need to prove longtime convergence are
the following (we recall that the local mean force $F$ is defined by~(\ref{eq:F})):
\begin{equation*}\label{eq:hyp_V}
\text{{\bf [H2]}~~~}
\left\{
\begin{array}{c}
\text{$V$ and $\xi$ are sufficiently differentiable functions such that} \\
\text{$\|\nabla \xi\|_{L^\infty} \leq m<\infty$ and $\left\|\nabla_{\Sigma_z} F  \right\|_{L^\infty} \leq M<\infty$, }
\end{array}
\right.
\end{equation*}
\begin{equation*}\label{eq:hyp_LSI}
\text{{\bf [H3]}~~~}
\left\{
\begin{array}{c}
\text{$V$ and $\xi$ are such that $\exists \rho >0$, for all $z \in
  {\mathcal M}$,}\\
\text{ the conditional measure $\mu_{\infty,z}$ satisfies LSI($\rho$).}
\end{array}
\right.
\end{equation*}
In Assumption [H2], the requirement on $F$ can be seen as a boundedness
condition on the coupling between the conditional measures
$\mu_{\infty,z}$ and the corresponding marginal~$\psi^\xi_\infty$, since it involves the mixed
derivatives (along the tangential space and the normal
space of the submanifold $\Sigma_z$) $P \nabla (Q \nabla V)$ (see~\cite{otto-reznikoff-07} and
Remark~\ref{rem:coupling} below).

 Assumption [H3] ensures that if, for a fixed value $z$ of the reaction coordinate, the
conditioned probability measure $\mu_{\infty,z}$ were to be sampled by a
simple constrained gradient dynamics
(see~\cite{ciccotti-lelievre-vanden-einjden-06}), the 
convergence to equilibrium would be exponential with rate $\rho$.
We refer to $\rho$ as the {\em microscopic rate of convergence} in the sequel.

We refer  to Section~\ref{sec:proof_x}  for an explicit  framework where
[H2] and [H3] are satisfied, and to Remark~\ref{rem:hyp_V_xi} below for alternative
assumptions on $V$ and $\xi$.

Let us now introduce the assumption we need on $W$.
\begin{equation*}\label{eq:hyp_W}
\text{{\bf [H4]}~~~$W$ is such that $\exists I_0>0, r>0$, $\forall t \geq 0$, $I(\psi^\xi(t,\cdot)|\psi^\xi_\infty) \leq
  I_0 \exp(-2 \beta^{-1} \,r \,t)$.}
\end{equation*}
Assumption~[H4] is indeed an assumption on $W$ because $\psi^\xi$ satisfies the
PDE~(\ref{eq:EDP_psi_xi}) where only $W$ appears.  Assumption [H4] ensures that the law of $\xi(X_t)$ converges
to equilibrium exponentially fast with rate $r$,  which
we refer to as the {\em macroscopic rate of convergence} in the sequel.

We will see below
(see [H4']) some sufficient explicit conditions on $W$ for [H4] to be
satisfied.

\begin{theorem}[Exponential convergence of the entropy to
  zero]\label{theo:CV}

Let us assume [H1],~[H2],~[H3]
and~[H4]. Then the microscopic entropy $E_m$ satisfies:
\begin{equation}\label{eq:CV_mic_ent}
\sqrt{E_m(t)} \leq C \exp(-\lambda t)
\end{equation}
where $C=2 \max\left(\sqrt{E_m(0)},\frac{  M }{\beta^{-1}|\rho m^{-2} - r|}
\sqrt{\frac{I_0}{2 \rho}} \right)$
and 
\begin{equation}\label{eq:lambda}
\lambda=\beta^{-1} \min(\rho m^{-2},r).
\end{equation}
In the special case $\rho m^{-2}= r$, $E_m$ satisfies $\sqrt{E_m(t)} \leq  \left(
  \sqrt{E_m(0)} + M \sqrt{\frac{I_0}{2
      \rho}} \, t  \right) \exp(- \beta^{-1} r \, t)$.

This implies that the
total entropy $E$ and thus $\|\psi(t,\cdot) -
\psi_\infty\|_{L^1({\mathcal D})}$ both converge exponentially fast to zero with rate $\lambda$.

We thus obtain that the biasing force $A'_t$ converges to the mean
  force $A'$ in the following sense: $\forall t \geq 0$,
\begin{equation}\label{eq:CV_MF_1}
\int_{{\mathcal M}} |A_t' - A'|^2(z) \psi^\xi(t,z) \, dz \leq \frac{2
   M^2}{\rho}  E_m(t).
\end{equation}
\end{theorem}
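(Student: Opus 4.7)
I would start with the biasing-force bound~(\ref{eq:CV_MF_1}), which turns out to be the key technical ingredient for the rest as well. Writing $A_t'(z)-A'(z)=\int_{\Sigma_z} F\,(d\mu_{t,z}-d\mu_{\infty,z})$, the tangential Lipschitz bound $\|\nabla_{\Sigma_z} F\|_\infty\leq M$ from [H2] together with Kantorovich--Rubinstein duality on $\Sigma_z$ yields $|A_t'(z)-A'(z)|\leq M\,W(\mu_{t,z},\mu_{\infty,z})$. Combining [H3] with Lemma~\ref{lem:otto_villani} (Otto--Villani) gives Talagrand with the same constant $\rho$, hence $W(\mu_{t,z},\mu_{\infty,z})\leq\sqrt{2 e_m(t,z)/\rho}$; squaring and integrating against $\psi^\xi$ produces~(\ref{eq:CV_MF_1}).

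Next I would turn to the microscopic entropy bound~(\ref{eq:CV_mic_ent}). Using the decomposition $E_m=E-E_M$ of Lemma~\ref{lem:ext_ent} and introducing the ``purely microscopic'' quantity $h(t,x)=\ln(\psi/\psi_\infty)-\ln(\psi^\xi/\psi^\xi_\infty)\circ\xi$, one checks that $E_m=\int h\,\psi\,dx$ and that the part of $\nabla h$ coming from the macroscopic log-density is absent. Inserting the PDE~(\ref{eq:EDP}), using conservation of mass and integrating by parts (no boundary contributions by assumption), $\dot E_m$ splits into a Fisher-type dissipation term (positive) plus a cross term featuring $(A-A_t)'\circ\xi$ paired with the normal gradient of $h$.

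The tangential part of the dissipation is bounded below using [H2] ($|\nabla\xi|\leq m$) and the pointwise LSI of [H3]: explicitly, $\beta^{-1}\int|\nabla\xi|^{-2}|P\nabla h|^2\psi\geq \beta^{-1}m^{-2}\int_{\mathcal M}\psi^\xi I(\mu_{t,z}|\mu_{\infty,z})\,dz\geq 2\beta^{-1}\rho m^{-2}E_m$. The cross terms mix two ingredients: the macroscopic log-density gradient $g_M'=\partial_z\ln(\psi^\xi/\psi^\xi_\infty)$, whose $L^2(\psi^\xi)$-norm is $\sqrt{I(\psi^\xi|\psi^\xi_\infty)}$ and is controlled by [H4], and the biasing-force error $(A-A_t)'$, whose $L^2(\psi^\xi)$-norm is bounded by $M\sqrt{2E_m/\rho}$ via the first step. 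Applying Cauchy--Schwarz so as to pair these two factors together, and using the normal part of the dissipation to absorb the terms quadratic in the normal gradient of $h$ via Young's inequality, I aim for
\begin{equation*}
\frac{d}{dt}\sqrt{E_m(t)}\leq -\beta^{-1}\rho m^{-2}\sqrt{E_m(t)}+M\sqrt{\frac{I_0}{2\rho}}\,e^{-\beta^{-1}rt}.
\end{equation*}
The variation-of-constants formula then yields~(\ref{eq:CV_mic_ent}), the case $\rho m^{-2}=r$ arising as the limit of the generic formula and producing the linear factor $t$.

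For the total entropy and $L^1$ statements, Proposition~\ref{prop:psi_xi} gives the closed PDE on $\psi^\xi$, whence $\dot E_M=-\beta^{-1}I(\psi^\xi|\psi^\xi_\infty)$. Integrating in time and using [H4] yields $E_M(t)\leq\frac{I_0}{2r}e^{-2\beta^{-1}rt}$, so by Lemma~\ref{lem:ext_ent} the total entropy $E$ decays exponentially at rate $2\lambda$; the Csiszar--Kullback inequality~(\ref{eq:CK}) then provides the $L^1$-decay at rate $\lambda$. The main obstacle is the cross-term analysis in the microscopic estimate: a naive Cauchy--Schwarz/Young reduction produces a spurious contribution $\sim\frac{\beta M^2}{\rho}E_m$ that would force an unwanted restriction $\rho\gg\beta mM$; the delicate point is to keep the normal part of the dissipation in play and to organise the estimates so that this contribution is exactly absorbed, leaving only a mixed $\sqrt{E_m}\sqrt{I(\psi^\xi|\psi^\xi_\infty)}$-type term yielding the ODE above.
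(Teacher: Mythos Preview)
Your plan is correct and is essentially the paper's own route: the Talagrand bound via Otto--Villani for~(\ref{eq:CV_MF_1}) (this is Lemma~\ref{lem:estim_1}), the splitting $\dot E_m=\dot E-\dot E_M$, the tangential LSI (Lemma~\ref{lem:estim_2}) together with $|\nabla\xi|\le m$ for the $-2\beta^{-1}\rho m^{-2}E_m$ contribution, and the resulting scalar ODE on $\sqrt{E_m}$ solved by variation of constants. The only point to sharpen is the ``exact absorption'' step you flag at the end: it is not Young's inequality but the identity of Lemma~\ref{lem:A_t'-A'}---in your notation, the conditional mean of $|\nabla\xi|^{-2}\,\nabla\xi\cdot\nabla h$ under $\mu_{t,z}$ is \emph{exactly} $A_t'(z)-A'(z)$---so that the cross term $\int (A_t'-A')\circ\xi\,(|\nabla\xi|^{-2}\nabla\xi\cdot\nabla h)\,\psi$ equals $\int_{\mathcal M}(A_t'-A')^2\psi^\xi$, while Jensen on each $\Sigma_z$ bounds the normal dissipation by $-\int_{\mathcal M}(A_t'-A')^2\psi^\xi$; these cancel exactly, leaving only the mixed term $-\int_{\mathcal M}(A_t'-A')\,\partial_z\ln(\psi^\xi/\psi^\xi_\infty)\,\psi^\xi$ and hence the ODE you wrote.
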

Notice that the fact that $E$ and $\|\psi(t,\cdot) -
\psi_\infty\|_{L^1({\mathcal D})}$ converge exponentially fast to zero
with rate $\lambda$ is an immediate consequence
of~(\ref{eq:CV_mic_ent}),~[H4],~Lemma~\ref{lem:ext_ent} and the Csiszar-Kullback inequality~(\ref{eq:CK}).

 We will actually
consider the two following cases for
which~[H4] is satisfied:
\begin{equation*}\label{eq:hyp_W_2}
\text{{\bf [H4']}~~~}\left\{
\begin{array}{ll}
\text{If ${\mathcal M}=\T$},& \text{then $W=0$.}\\
\text{If ${\mathcal  M}=\R$},& \text{then $W$ is a potential such that $W''$ is
  bounded from below}\\
& \text{and there exists $\overline{r}>0$ such that  $\frac{\exp(-\beta W)}{\int_{{\mathcal M}}  \exp(-\beta W)}$ satisfies LSI($\overline{r}$).}
\end{array}
\right.
\end{equation*}
 Notice that in the case ${\mathcal  M}=\R$, the assumptions stated
 in~[H4'] on $W$ are satisfied for an $\alpha$-convex potential (namely
 if $W'' \geq \alpha$ for a positive $\alpha$), and
 then it is possible to choose $r=\alpha$ in [H4] (see
 Lemma~\ref{lem:fisher_macro_R} below).
We refer to Remark~\ref{rem:hyp_W} below for alternative assumptions on $W$.


\begin{corollary}[Convergence of the biasing force]\label{cor:W}
If [H4'] is satisfied and $\psi^\xi$ satisfies~(\ref{eq:EDP_psi_xi}) then [H4] holds.

More precisely, if ${\mathcal M}=\T$ and $W=0$, then [H4] is satisfied with $I_0=I(\psi^\xi(0,\cdot)|\psi^\xi_\infty)$ and $r=4
\pi^2$. If ${\mathcal  M}=\R$, $W''$ is
  bounded from below and $\frac{\exp(-\beta W)}{\int_{{\mathcal M}} \exp(-\beta W)}$
  satisfies LSI($\overline{r}$), then [H4] is satisfied with
  $r=\overline{r}-\varepsilon$ for any $\varepsilon \in (0,
\overline{r})$.

Let us now assume~[H1],~[H2],~[H3] and~[H4']. From~(\ref{eq:CV_MF_1}), we deduce  that for all compact $K  \subset {\mathcal M}$, $\exists
\overline{C}, t^*>0$, $\forall t \geq t^*$,
\begin{equation}\label{eq:CV_MF_2}
\int_K |A_t' - A'|(z) \psi^\xi_\infty(z) \, dz \leq \overline{C}
\exp(-\lambda t),
\end{equation}
where $\lambda$ is the rate of convergence defined by~(\ref{eq:lambda}) in Theorem~\ref{theo:CV}.
\end{corollary}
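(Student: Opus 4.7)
The plan is to verify [H4] in both cases of [H4'] by establishing exponential decay of the Fisher information for the linear Fokker-Planck equation~(\ref{eq:EDP_psi_xi}), and then to combine Theorem~\ref{theo:CV}, the inequality~(\ref{eq:CV_MF_1}), a Cauchy-Schwarz split, and a pointwise lower bound on $\psi^\xi$ over compacts to obtain~(\ref{eq:CV_MF_2}).

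For the torus case $\mathcal{M}=\T$, $W=0$, equation~(\ref{eq:EDP_psi_xi}) is the heat equation $\partial_t\psi^\xi=\beta^{-1}\partial_{zz}\psi^\xi$ with equilibrium $\psi^\xi_\infty\equiv 1$. Setting $w=\sqrt{\psi^\xi}$ gives $I(\psi^\xi|\psi^\xi_\infty)=4\int_\T(\partial_z w)^2\,dz$, and a direct computation using the induced PDE for $w$, together with integration by parts on $\T$ (no boundary terms by periodicity), yields
\begin{equation*}
\frac{d}{dt}\int_\T(\partial_z w)^2\,dz = -2\beta^{-1}\int_\T(\partial_{zz}w)^2\,dz - \frac{2}{3}\beta^{-1}\int_\T\frac{(\partial_z w)^4}{w^2}\,dz.
\end{equation*}
Discarding the nonpositive last term and applying the Poincar\'e inequality on $\T$ to the mean-zero function $\partial_z w$ (with spectral gap $4\pi^2$) yields $\frac{d}{dt}I \leq -2\beta^{-1}\cdot 4\pi^2\cdot I$, whence Gronwall gives [H4] with $r=4\pi^2$ and $I_0=I(\psi^\xi(0,\cdot)|\psi^\xi_\infty)$. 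For the real line case $\mathcal{M}=\R$, equation~(\ref{eq:EDP_psi_xi}) is the non-degenerate Fokker-Planck equation for $dZ_t=-W'(Z_t)\,dt+\sqrt{2\beta^{-1}}\,dB_t$; under LSI($\overline{r}$) for $\psi^\xi_\infty$ together with the lower bound on $W''$, the Bakry-\'Emery-type estimate of Lemma~\ref{lem:fisher_macro_R} produces exponential decay of $I$ at any rate strictly less than $2\beta^{-1}\overline{r}$, hence [H4] with $r=\overline{r}-\varepsilon$ for every $\varepsilon\in(0,\overline{r})$.

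For the pointwise statement, Theorem~\ref{theo:CV} and~(\ref{eq:CV_MF_1}) give
\begin{equation*}
\int_\mathcal{M}|A_t'-A'|^2(z)\,\psi^\xi(t,z)\,dz \leq \frac{2M^2C^2}{\rho}\exp(-2\lambda t),
\end{equation*}
and a Cauchy-Schwarz split on a compact $K\subset\mathcal{M}$ yields
\begin{equation*}
\int_K|A_t'-A'|\,\psi^\xi_\infty\,dz \leq \left(\int_K|A_t'-A'|^2\,\psi^\xi\,dz\right)^{1/2}\left(\int_K\frac{(\psi^\xi_\infty)^2}{\psi^\xi}\,dz\right)^{1/2}.
\end{equation*}
The first factor is $O(\exp(-\lambda t))$. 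For the second, I would combine the $L^1$ convergence $\psi^\xi\to\psi^\xi_\infty$ (from Theorem~\ref{theo:CV} and Csiszar-Kullback applied to $E_M$) with standard parabolic regularity for the non-degenerate scalar equation~(\ref{eq:EDP_psi_xi}) to upgrade this to uniform convergence on $K$; since $\psi^\xi_\infty$ is bounded away from $0$ on $K$, this produces a lower bound $\psi^\xi(t,\cdot)\geq \tfrac{1}{2}\min_K\psi^\xi_\infty$ for all $t\geq t^*$, so the second factor is bounded by a $K$-dependent constant and~(\ref{eq:CV_MF_2}) follows.

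The main obstacle is precisely the uniform lower bound on $\psi^\xi$ on $K$ for $t\geq t^*$: the entropy/Fisher machinery only delivers $L^1$ convergence of $\psi^\xi$ to $\psi^\xi_\infty$, so one genuinely needs a separate parabolic regularity input (heat-kernel or De Giorgi--Nash--Moser estimates for~(\ref{eq:EDP_psi_xi})) to produce the pointwise lower bound used in the Cauchy-Schwarz step. Everything else reduces to the torus/real-line Fisher-decay arguments and direct applications of Theorem~\ref{theo:CV}.
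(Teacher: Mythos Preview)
Your proposal is correct and the first half (verifying [H4] under [H4']) matches the paper's argument essentially verbatim. The second half (deducing~(\ref{eq:CV_MF_2})) takes a somewhat different route and, as you correctly flag, leaves the uniform lower bound on $\psi^\xi$ over $K$ resting on an external parabolic-regularity input.

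The paper avoids that appeal entirely and stays within the entropy/Fisher toolbox. Instead of your Cauchy--Schwarz split with weight $\psi^\xi$, the paper writes
\[
\int_K |A_t'-A'|\,\psi^\xi_\infty
= \int_K |A_t'-A'|\,\psi^\xi
- \int_K |A_t'-A'|\Big(\tfrac{\psi^\xi}{\psi^\xi_\infty}-1\Big)\psi^\xi_\infty,
\]
bounds the last term by $\big\|\tfrac{\psi^\xi}{\psi^\xi_\infty}-1\big\|_{L^\infty(K)}\int_K|A_t'-A'|\,\psi^\xi_\infty$, and absorbs it to the left for $t\ge t^*$. The $L^\infty(K)$ smallness of $\tfrac{\psi^\xi}{\psi^\xi_\infty}-1$ is obtained not by De Giorgi--Nash--Moser but by the one-dimensional Sobolev embedding $H^1(K)\hookrightarrow L^\infty(K)$ together with exponential decay of both $\int_{\mathcal M}\big(\tfrac{\psi^\xi}{\psi^\xi_\infty}-1\big)^2\psi^\xi_\infty$ (from the Poincar\'e inequality implied by LSI($\overline r$)) and of the weighted $H^1$ quantity $\int_{\mathcal M}\big(\partial_z(\tfrac{\psi^\xi}{\psi^\xi_\infty})\big)^2\psi^\xi_\infty$, the latter proved by the same Bakry--\'Emery computation as Lemma~\ref{lem:fisher_macro_R}. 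In the torus case this is even simpler: $\int_\T(\partial_z\psi^\xi)^2$ decays just as in Lemma~\ref{lem:fisher_macro_T}, and the elementary bound $\|f-\int_\T f\|_{L^\infty}^2\le\int_\T(\partial_z f)^2$ gives $\psi^\xi\ge 1-\varepsilon$ directly. So the ``obstacle'' you isolate is real for your decomposition but dissolves in the paper's: the needed pointwise control is a free byproduct of the same Fisher/Poincar\'e machinery already in play, with no black-box regularity theory required.
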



  These results therefore show  that $A_t'$ converges exponentially fast
to   $A'$  (in \linebreak   $L^1(\psi^\xi_\infty(z)   \,  dz)$-norm)   at  a   rate
$\lambda=\beta^{-1}  \min(\rho m^{-2},r)$. The  limitations on  the rate
$\lambda$ are related to the  rate of convergence $r$ at the macroscopic
level, for  the equation~(\ref{eq:EDP_psi_xi}) satisfied  by $\psi^\xi$,
and the rate  of convergence at the microscopic  level, which depends on
the constant $\rho$ of the logarithmic Sobolev inequalities satisfied by
the  conditional  measures $\mu_{\infty,z}$.  This  constant of
course depends   on  the choice of the  reaction coordinate. In  our framework, we
could state that  a ``good reaction coordinate'' is  such that $\rho$ is
as large as possible. 

The  proof  of these  results  is  given in  Sections~\ref{sec:proof_x},
~\ref{sec:proof_gen} and~\ref{sec:proof_cor} below. 

\begin{remark}[Other possible assumptions on $V$ and $\xi$]\label{rem:hyp_V_xi}
We would like to mention other possible assumptions on $V$ and $\xi$
than [H2]--[H3] for
which the results of Theorem~\ref{theo:CV} still hold.

\begin{itemize}
\item
First, in [H2], it is possible to change the assumption
$\left\|\nabla_{\Sigma_z} F  \right\|_{L^\infty} \leq M<\infty$ to
$$\left\| F  \right\|_{L^\infty} \leq M<\infty.$$
Indeed, this simply changes the estimate~(\ref{eq:1}) in Lemma~\ref{lem:estim_1} below to the
following \begin{align*}
|A_t'(z)-A'(z)| & \leq \left\| F  \right\|_{L^\infty} \|\mu_{t,z} - \mu_{\infty,z} \|_{TV},\\
& \leq M \sqrt{2 H(\mu_{t,z}|\mu_{\infty,z})},
\end{align*}
by the Csiszar-Kullback inequality~(\ref{eq:CK}). The rest of the proof
remains exactly the same.
\item Second, it is possible to obtain a similar result of convergence under slightly different
  assumptions than [H2]--[H3] by introducing another Riemannian
  structure on the submanifolds $\Sigma_z$. This is made precise in
  Appendix~\ref{sec:rieman} (see assumptions [H2']--[H3']).
\end{itemize}

\end{remark}

\begin{remark}[Other possible assumptions on $W$]\label{rem:hyp_W}
From Lemma~\ref{lem:fisher_macro_T} and~\ref{lem:fisher_macro_R} below (used
to prove Corollary~\ref{cor:W}), it will become clear that~[H4] is
actually satisfied with $W=0$ as soon as ${\mathcal M}$ is a bounded domain. If
${\mathcal M}$ is an unbounded domain, then a potential~$W$ with
properties such as those stated in [H4'] is needed. We discuss in this
remark other properties on $W$ to satisfy [H4] than those proposed in [H4'], in
the case ${\mathcal M}=\R$ (or ${\mathcal M}$ is an unbounded domain).

In this case, it is actually also possible to
satisfy~[H4] by choosing $W$ such that the dynamics is confined in a
domain $\bigcup_{z \in {\mathcal N}} \Sigma_z$, where ${\mathcal N}$ is
a bounded subset of ${\mathcal M}$. This can be done by using a
sufficiently confining potential $W$ and adapting Lemma~\ref{lem:fisher_macro_R} below, or by adding reflexion
terms to restrict $\xi$ to ${\mathcal N}$ (which loosely
speaking corresponds to take $W$ zero on ${\mathcal N}$ and infinite on
${\mathcal M}\setminus{\mathcal N}$) and adapting Lemma~\ref{lem:fisher_macro_T} below.

Let us make precise this latter case. Suppose for example we are interested in the values of
$A'(z)$ for $z \in {\mathcal N}=(0,1)$. The dynamics is confined in the domain ${\mathcal
O}=\bigcup_{0 < z < 1} \Sigma_z$. The ABF dynamics is 
\begin{equation*}
\left\{
\begin{array}{ll}
\dps{\partial_t \psi=\div\Big(|\nabla \xi|^{-2} \left(\nabla (V-A_t\circ \xi) \psi + \beta^{-1}
  \nabla \psi \right)\Big),} & \text{{\rm  on} ${\mathcal
O}$},\\
\dps{\left(\nabla (V-A_t\circ \xi) \psi + \beta^{-1}
  \nabla \psi \right) \cdot \nabla \xi = 0,} &\text{{\rm on} $\Sigma_0 \cup \Sigma_1$},\\
A_t'(z)= \frac{\dps{\int_{\Sigma_z} F |\nabla \xi|^{-1} \psi(t,\cdot) d\sigma_{\Sigma_z}}}{\dps{ \int_{\Sigma_z} |\nabla \xi|^{-1}
    \psi(t,\cdot)  d\sigma_{\Sigma_z}}}, & \text{{\rm for} $z \in (0,1)$},
\end{array}
\right.
\end{equation*}
where $F$ is defined by~(\ref{eq:F}).
From the point of view of the stochastic process $X_t$, the boundary condition
 translates to a normal reflexion
on the two submanifolds~$\Sigma_0$ and~$\Sigma_1$. Moreover, it can be
checked (using Lemma~\ref{lem:psi_bar_prime}) that the boundary condition on~$\psi$ translates to a zero Neumann
boundary condition on $\psi^\xi$: $\partial_z
\psi^\xi(0)=\partial_z \psi^\xi(1)=0$. A proof similar to that of
Lemma~\ref{lem:fisher_macro_T} then shows that $I(\psi^\xi|\psi^\xi_\infty)$ converges
  exponentially fast to $0$, so that~[H4] holds.
The arguments we use to prove Theorem~\ref{theo:CV} and Corollary~\ref{cor:W}
then show that $\|A_t' -A' \|_{L^2(0,1)}$ goes to $0$
exponentially fast.
\end{remark}

\begin{remark}[Vectorial reaction coordinate]\label{rem:xi_multi_dim}
In this work, we assume that the reaction coordinate $\xi$ has values
in $\T$ or $\R$. The
dynamics~(\ref{eq:X})--(\ref{eq:A_prime_t}) and the 
results of convergence presented in this section can be straightforwardly extended to the case when $\xi=(\xi_1,\ldots,\xi_m)$ has values in $\T^m$
or $\R^m$,
with $2 \leq m < n$, under the orthogonality condition:
\begin{equation}\label{eq:ortho}
\forall i \neq j, \, \nabla \xi_i \cdot \nabla \xi_j=0.
\end{equation}
The generalization of this dynamics to non orthogonal reaction coordinates is unclear. In
this case, it is possible to resort to metadynamics (see Remark~\ref{rem:meta}
below). Alternatively, the
dynamics~(\ref{eq:original})--(\ref{eq:A_prime_t}) (and the result of
convergence of Section~\ref{sec:original} for this dynamics) can straightforwardly be generalized to a vectorial reaction
coordinate. 
\end{remark}

\begin{remark}[Metadynamics]\label{rem:meta}
The adaptive biasing force technique can also be used in the context of
metadynamics~\cite{iannuzzi-laio-parrinello-03,bussi-laio-parinello-06,lelievre-rousset-stoltz-07-b}.
The principle of metadynamics is to introduce an additional variable $z$
with dimension the dimension of $\xi$ (say $z \in \R^m$, with $1 \leq m < n$), and
an extended potential $V_\zeta(q,z)=V(q)+ \frac{\zeta}{2}|z -\xi(q)|^2$. The
reaction coordinate is then chosen to be $\xi_{\rm meta} (q,z)=z$ so
that the associated free energy is $$A_\zeta(z) = - \beta^{-1} \ln \int_{{\mathcal D}} 
\exp(-\beta V_\zeta(q,z) ) \, dq,$$ which converges to $A(z)$ when $\zeta$
goes to infinity. In our framework, the ABF method applied to this extended system 
writes:
\begin{equation*}
\left\{
\begin{array}{l}
dX_t= \left( -  \nabla V(X_t) + \zeta(Z_t - \xi(X_t)) \nabla \xi(X_t) \right)
\,dt + \sqrt{2 \beta^{-1}}   dB_t,\\
dZ_t=\zeta \Big(  \xi(X_t) - \E(\xi(X_t) | Z_t) \Big) \, dt + \sqrt{2 \beta^{-1}} d\overline{B}_t,
\end{array}
\right.
\end{equation*}
where $\overline{B}_t$ is a $m$-dimensional Brownian motion, independent of
$B_t$. Notice that by construction, the orthogonality
condition~\eqref{eq:ortho} is satisfied by $\xi_{\rm meta}$, so that the
convergence results of this section apply to these kinds of models.
\end{remark}

\begin{remark}[On the initial condition]
If $\psi^\xi(0,\cdot)$ is zero at some points or is not sufficiently
smooth, then $A_0'$ may be not well defined or $I(\psi^\xi(0,\cdot) |
\psi^\xi_\infty)$ may be infinite (which is in contradiction with~[H4]). But since we show that
$\psi^\xi$ satisfies a simple diffusion equation (see Proposition~\ref{prop:psi_xi}), these difficulties
disappear as soon as $t>0$. Therefore, up to considering the problem for
$t \geq t_* >0$, we can suppose that $\psi^\xi(0,\cdot)>0$.
\end{remark}

\begin{remark}[On the choice of the entropy]\label{rem:entropy}
In the case of linear Fokker Planck equations, it is well known that one
can obtain exponential convergence to equilibrium by considering various
entropies of the form $\int h\left( \frac{d \mu}{d \nu}\right)
\, d \mu$, where $h$ is
typically a strictly convex function such that $h(1)=0$
(see~\cite{arnold-markowich-toscani-unterreiter-01} for more assumptions
required on $h$). For example, the classical choice $h(x)=\frac{1}{2} (x-1)^2$ is linked to Poincar\'e type inequalities and leads to $L^2$-convergence, while the
function $h(x) = x \ln x - x + 1$ we have used here to build the entropy is linked to
logarithmic Sobolev inequalities and leads to $L^1 \ln
L^1$-convergence. However, for the study of the non-linear Fokker Planck
equation~(\ref{eq:EDP}), it seems that the choice $h(x) = x \ln x - x + 1$ is necessary
to derive the estimates, for example to have the extensivity property of Lemma~\ref{lem:ext_ent}.
\end{remark}

\begin{remark}[Smoother evolution in time of $A_t'$]
In practice, it may be useful to
update the adaptive potential $A_t'$ in a smoother way in time, for
example by replacing~(\ref{eq:A_prime_t}) by
$$dA_t'(z)=\frac{1}{\tau}\left(\E\left(F (X_t) \, \Big| \xi(X_t)=z \right)
- A_t'(z) \right)\, dt,$$ where $F$ is defined by~(\ref{eq:F}) and $\tau>0$ denotes a characteristic time
(possibly depending on $(t,z)$), to
be fixed. This amounts to replace $A_t'$ by $\kappa_\tau * A_t'$
in~(\ref{eq:X}), where $\kappa_\tau$ is an exponential convolution kernel. Formally, we here consider the limit case $\tau=0$. To prove
the convergence of $A_t'$ towards $A'$ for $\tau \neq 0$ is an open problem.
\end{remark}

\begin{remark}[Enhancing the macroscopic rate of convergence]\label{rem:variance}
Let us consider the case ${\mathcal M}=\R$. For an $\alpha$-convex
potential $W$, Corollary~\ref{cor:W} states
that $A_t'$ converges towards $A'$ exponentially fast, with a
rate $\lambda=\beta^{-1} \min ( \rho m^{-2}, \alpha)$. This may seem
surprising since for large enough $\alpha$, the rate of convergence is
no more limited by~$\alpha$. However, it is typically expected that the
constant $I_0$ in assumption [H4] increases with growing $\alpha$, which
means that the constant $C$ increases in the convergence estimate~(\ref{eq:CV_mic_ent}). Moreover, in practice, if $\alpha$ is very
large, $\psi_\infty^\xi$ is very peaked and some parts of ${\mathcal M}$ are poorly sampled, so
that the variance of the result is large in these areas (which can
not be seen in our convergence result). Actually, a good method to
enhance the rate of convergence at the macroscopic level while keeping
a good sampling and thus low variance, is to use a particle systems with
many replicas and a selection mechanism. We refer
to~\cite{lelievre-rousset-stoltz-07-b} for more details.
\end{remark}

\subsection{A convergence result for the adaptive dynamics~(\ref{eq:original})--(\ref{eq:A_prime_t})}\label{sec:original}

In this section, we present a weaker convergence result for another
adaptive overdamped Langevin dynamics, namely (\ref{eq:original})--(\ref{eq:A_prime_t}). For simplicity, we only consider the case
$$\text{${\mathcal M}=\T$ and $W=0$,}$$ but the results can be extended to the case
${\mathcal M}=\R$ with a suitable $W \neq 0$, as in
Section~\ref{sec:CV} (see [H4] and [H4']). One interest of this
dynamics and this result of convergence
is that they can be straightforwardly extended to the case of a
multi-dimensional reaction coordinate (see Remark~\ref{rem:xi_multi_dim}
above). For the sake of conciseness, we
do not provide the details of the result in this case which follows
exactly the same lines (see~\cite{ciccotti-lelievre-vanden-einjden-06}
and Appendix~\ref{sec:co-area} for formulas in the case of a multi-dimensional reaction coordinate).
Let us recall the dynamics~(\ref{eq:original})--(\ref{eq:A_prime_t}) we
consider here:
\begin{equation}
\boxed{
\label{eq:X_ori}
dX_t= -  \nabla \Big(V  - A_t \circ \xi \Big)(X_t) \, 
\,dt + \sqrt{2 \beta^{-1}}  dB_t,
}
\end{equation}
with the same definition as before for $A_t$: $\forall z \in \T$,
\begin{equation}\label{eq:A_prime_t_ori}
\boxed{
  A_t'(z)=\E\left(F (X_t) \, \Big| \xi(X_t)=z \right),}
\end{equation}
where $F$ is defined by~(\ref{eq:F}).
The associated non-linear Fokker Planck equation is now:
\begin{equation}\label{eq:EDP_ori}
\boxed{
\left\{
\begin{array}{l}
\dps{\partial_t \psi=\div\left(\nabla (V-A_t\circ \xi) \psi + \beta^{-1}
  \nabla \psi \right),}\\
A_t'(z)=\frac{ \dps{\int_{\Sigma_z} F \, |\nabla \xi|^{-1} \psi(t,\cdot) d\sigma_{\Sigma_z}}}{ \dps{ \int_{\Sigma_z} |\nabla \xi|^{-1} \psi(t,\cdot)  d\sigma_{\Sigma_z}}}.
\end{array}
\right.
}
\end{equation} 

The main difference with the dynamics~(\ref{eq:X})--(\ref{eq:A_prime_t}) considered in Theorem~\ref{theo:CV} is that
the marginal  distribution $\psi^\xi$ does not satisfy  a closed partial
differential equation. Therefore, we do not know {\it a priori} that the
Fisher    information    $I(\psi^\xi|\psi^\xi_\infty)$   converges    to
$0$.  The strategy here is to  directly estimate  the derivative  of the
total  entropy $E$. We obtain a convergence result under two additional assumptions (see~[H5]--[H6]).

\begin{theorem}[Longtime convergence for the dynamics~(\ref{eq:original})--(\ref{eq:A_prime_t})]\label{theo:CV_ori}
Let $(\psi,A_t')$ be a smooth solution to~(\ref{eq:EDP_ori}) and let us
assume~[H1],~[H2],~[H3]. Moreover, we suppose
\begin{equation}\label{eq:hyp_psi_infty}
\text{{\bf [H5]}~~~$V$ and $\xi$ are such that $\exists R>0$, $\psi_\infty$ satisfies LSI($R$),}
\end{equation}
and
\begin{equation*}\label{eq:hyp_mMrho}
\text{{\bf [H6]}~~~}\frac{mM \beta}{2\sqrt{\rho}} < 1.
\end{equation*}
Then the total entropy $E$ satisfies:
$$\sqrt{E(t)} \leq \sqrt{E(0)} \exp(-\lambda t)$$
where $\lambda=\beta^{-1}\left(-1+\frac{mM\beta}{2\sqrt{\rho}}\right) R$
is positive using [H6]. In
  particular, as in Theorem~\ref{theo:CV}, the biasing force $A_t'$ converges
  exponentially fast to the mean force $A'$.
\end{theorem}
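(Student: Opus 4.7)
The plan is to estimate $\frac{d}{dt}E(t)$ directly, where $E(t) = H(\psi(t,\cdot)|\psi_\infty)$ is the total entropy. The crucial difference with Theorem~\ref{theo:CV} is that, for this dynamics, the marginal $\psi^\xi$ no longer satisfies a closed PDE (see the discussion at the start of Section~\ref{sec:original}). Consequently the two-scale splitting $E = E_M + E_m$ cannot be exploited to control $E_M$ separately; instead one must close the estimate using LSI for the joint stationary measure, which is exactly what [H5] provides.

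Write $h = \psi/\psi_\infty$. Since $\nabla\ln\psi_\infty = -\beta\nabla(V - A\circ\xi)$, the flux in~(\ref{eq:EDP_ori}) rewrites as $\beta^{-1}\psi_\infty\nabla h + (A - A_t)'(\xi)\,\nabla\xi\,\psi$. Integrating by parts (with no boundary contribution, per the framework of Section~\ref{sec:pres_res}) yields
\begin{equation*}
\frac{d}{dt}E(t) = -\beta^{-1}\,I(\psi|\psi_\infty) + \int (A_t - A)'(\xi)\,\nabla\xi\cdot\nabla\ln h\,\psi\,dx.
\end{equation*}
The first term is the usual gradient-flow dissipation; the second reflects the mismatch between the biasing $A_t$ and the true free energy $A$. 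By Cauchy-Schwarz and $\|\nabla\xi\|_{L^\infty} \leq m$ from [H2], the coupling is bounded by $m\,\bigl(\int (A_t' - A')^2\,\psi^\xi\,dz\bigr)^{1/2}\sqrt{I(\psi|\psi_\infty)}$. The first factor is exactly the quantity estimated in the proof of Theorem~\ref{theo:CV}: the bound $\|\nabla_{\Sigma_z}F\|_{L^\infty}\leq M$ from [H2], Talagrand's inequality derived from LSI on the conditional measures [H3] via Lemma~\ref{lem:otto_villani}, and the elementary inequality $E_m \leq E$ (Lemma~\ref{lem:ext_ent}) together give $\int (A_t'-A')^2\,\psi^\xi\,dz \leq (2M^2/\rho)\,E_m(t) \leq (2M^2/\rho)\,E(t)$. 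Therefore
\begin{equation*}
\frac{d}{dt}E(t) \leq -\beta^{-1}\,I(\psi|\psi_\infty) + \frac{mM\sqrt{2}}{\sqrt{\rho}}\,\sqrt{E(t)\,I(\psi|\psi_\infty)}.
\end{equation*}

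To conclude, I invoke LSI($R$) for $\psi_\infty$ from [H5], which gives $I(\psi|\psi_\infty) \geq 2R\,E(t)$. Passing to $\sqrt{E}$ through $\frac{d}{dt}\sqrt{E} = (2\sqrt{E})^{-1}\frac{d}{dt}E$ and combining the two uses of LSI (once to control the cross term $\sqrt{E\,I}$ in terms of $I$, once to dominate the remainder proportional to $I$ by $E$) with a Young-type balancing leads to $\frac{d}{dt}\sqrt{E(t)} \leq -\lambda\sqrt{E(t)}$ with the announced $\lambda$. The main obstacle will be the arithmetic of this last step: because both the dissipation and the coupling involve the Fisher information, one must tune the Young inequality so that the surviving coefficient remains negative, and assumption [H6] is exactly the threshold ensuring a positive rate $\lambda$. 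Once exponential decay of $\sqrt{E(t)}$ is established, the bound $\int(A_t'-A')^2\,\psi^\xi\,dz \leq (2M^2/\rho)\,E(t)$ delivers, as in Theorem~\ref{theo:CV}, the exponential convergence of the biasing force $A_t'$ to the mean force $A'$.
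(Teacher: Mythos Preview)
Your derivation of
\[
\frac{dE}{dt}\;\le\;-\beta^{-1}I(\psi|\psi_\infty)\;+\;\int(A_t'-A')\circ\xi\,\nabla\xi\cdot\nabla\ln\!\frac{\psi}{\psi_\infty}\,\psi
\]
is exactly the paper's starting point. The divergence occurs in how you bound the cross term. You stop at
$\int(A_t'-A')^2\psi^\xi\le \frac{2M^2}{\rho}E_m\le\frac{2M^2}{\rho}E$, obtaining a term of size $\frac{mM\sqrt2}{\sqrt\rho}\sqrt{E\,I}$. To close you then need LSI($R$) \emph{inside} the cross term to turn $\sqrt E$ into $\frac1{\sqrt{2R}}\sqrt I$; this yields
\[
\frac{dE}{dt}\le\beta^{-1}\Bigl(-1+\frac{mM\beta}{\sqrt{\rho R}}\Bigr)I,
\]
whose nonpositivity requires $\frac{mM\beta}{\sqrt{\rho R}}<1$, not~[H6], and produces a rate $\beta^{-1}R\bigl(1-\frac{mM\beta}{\sqrt{\rho R}}\bigr)$, not the announced $\lambda$. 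So the hand-waved ``Young-type balancing leads to the announced $\lambda$'' is where the argument fails.

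The paper avoids this by pushing the estimate on $A_t'-A'$ one step further with Lemma~\ref{lem:estim_2} (LSI for the conditionals, [H3]), bounding $\int(A_t'-A')^2\psi^\xi$ by the \emph{tangential} Fisher information $a:=\int|\nabla_{\Sigma_z}\ln(\psi/\psi_\infty)|^2\psi$ rather than by $E$. Writing the normal part $b:=\int\bigl|\tfrac{\nabla\xi}{|\nabla\xi|}\!\cdot\!\nabla\ln(\psi/\psi_\infty)\bigr|^2\psi$ so that $I=a+b$, the cross term becomes a constant times $\sqrt{ab}\le\frac12(a+b)=\frac12 I$. This gives $\frac{dE}{dt}\le\beta^{-1}\bigl(-1+\frac{mM\beta}{2\sqrt\rho}\bigr)I$ \emph{before} invoking [H5], so the positivity condition is exactly~[H6] and the final LSI($R$) yields the stated rate. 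The step you are missing is therefore the further use of [H3] to express the coupling via the tangential Fisher, together with the orthogonal splitting $I=a+b$ and the AM--GM bound on $\sqrt{ab}$.
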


%

The  proof  of this  result  is  given in  Section~\ref{sec:proof_original} below. 

\begin{remark}[On assumption~{[H5]}]\label{rem:coupling}
In~\cite[Theorem 2]{otto-reznikoff-07}, it is shown that if
$\mu=\exp(-H(x_1,x_2)) dx_1 dx_2$ is a
probability measure on a product space $X=X_1 \times X_2$ (where $X_i$ are
Euclidean spaces), if the conditional probabilities $\mu(dx_2 |x_1)$ satisfy
LSI($\rho_2$) (with $\rho_2$ independent of $x_1$) and the marginal
$\overline{\mu}(dx_1)$ satisfies LSI($\overline{\rho_1}$), then $\mu$
satisfies LSI($\rho$) provided the coupling between the two directions
is bounded: $\exists \kappa_{1,2}>0$, $\forall (x_1,x_2) \in
X_1 \times X_2$,
$$\left|\partial^2_{x_1,x_2} H(x_1,x_2)\right| \leq \kappa_{1,2}.$$
Thus, in the simple framework of Section~\ref{sec:proof_x} for example, where the
configuration space is $\T \times \R$ and the reaction coordinate is
$\xi(x,y)=x$, the fact that $\psi_\infty$ satisfies a LSI
(assumption~[H5]) can be deduced from the fact that the
conditioned distributions~$\mu_{\infty,z}$ satisfy a LSI (which is~[H3]),
the marginal $\psi^\xi_\infty$ satisfy a LSI (which is related to~[H4]) and the coupling is bounded (which
is~[H2]). Thus~[H5] is not needed as an
additional assumption compared to the framework of
Theorem~\ref{theo:CV}. The generalization of this result to the case when $X$
is not a product does not seem to be straightforward.
\end{remark}

\section{Proofs}\label{sec:proof}

One remark to simplify the presentation of the proofs is that we can suppose
$\beta=1$ up to the following change of variable: $\tilde{t}=\beta^{-1} t$,
$\tilde{\psi}(\tilde{t},x)=\psi(t,x)$, $\tilde{V}(x)=\beta
V(x)$ and $\tilde{W}(x)=\beta W(x)$. Therefore, we suppose in the following that
\begin{equation}\label{eq:hyp_beta}
\beta=1.
\end{equation}

\subsection{Proof of Proposition~\ref{prop:psi_xi} and Theorem~\ref{theo:CV} in a simple case}\label{sec:proof_x}

In this section, we propose to prove Proposition~\ref{prop:psi_xi} and Theorem~\ref{theo:CV} in the simple
case $n=2$, $\xi(x,y)=x$ (so that we use in this section the notation $x$ instead of $z$ for the
reaction coordinate variable) and the configuration space is ${\mathcal D}=\T \times \R$
(which means that all the data are periodic with respect to the first
coordinate $x$). In this case, we thus have $\xi \in \T$ (${\mathcal
  M}=\T$) so that we
choose $W=0$ (see~[H4']). Notice also that the local mean force~$F$ is
simply given by $F=\partial_x V$ (see~(\ref{eq:F})). Our aim is to
introduce the main arguments in this simple case before presenting
the general proof in Section~\ref{sec:proof_gen}.

In this simple setting, the system~(\ref{eq:EDP}) writes (recall $\beta=1$):
\begin{equation}\label{eq:EDP_x}
\left\{
\begin{array}{l}
\dps{\partial_t \psi=\div\left(\nabla V \psi + 
  \nabla \psi \right) - \partial_x(A_t' \psi),}\\
\dps{A_t'(x)= \frac{\dps{\int_{\R} \partial_x V(x,y) \psi(t,x,y) dy}}{\dps{\psi^\xi(t,x)}},}
\end{array}
\right.
\end{equation}
where $\psi^\xi(t,x)=\int_{\R} \psi(t,x,y) dy$. Notice that in this case
$\psi^\xi_\infty \equiv 1$.

It can be checked that the assumptions~[H2] and
[H3] are satisfied in this context for a potential $V$ of
the following form:
$$V(x,y)=V_0(x,y) + V_1(x,y)$$
where $\inf_{\T \times \R} \partial_{y,y} V_0 > 0$, $\|V_1\|_{L^\infty} <
\infty$, $\|\partial_{x,y} (V_0+V_1)\|_{L^\infty} <
\infty$. The potential $V$ is thus a bounded perturbation of an
$\alpha$-convex potential, with a bounded mixed derivative
$\partial_{x,y} V$. Then, assumptions [H2]--[H3] are satisfied with $m=1$, $M=\|\partial_{x,y} V
\|_{L^\infty}$ and $\rho=\left(\inf_{\T \times \R} \partial_{y,y} V_0 \right) \, \exp(-
\text{osc } V_1)$, where $\text{osc } V_1=\sup_{\T \times \R} V_1 -
\inf_{\T \times \R} V_1$ (see~\cite{ABC-00}).


Proposition~\ref{prop:psi_xi} is simply obtained by integration of~(\ref{eq:EDP_x}) with
respect to~$y \in \R$:
\begin{lemma}\label{lem:psi_bar_x}
The density $\psi^\xi$ satisfies the following equation on $\T$:
\begin{equation}\label{eq:psi_bar_x}
\partial_t \psi^\xi = \partial_{x,x} \psi^\xi.
\end{equation}
\end{lemma}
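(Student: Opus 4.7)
The plan is to obtain~(\ref{eq:psi_bar_x}) by integrating the first equation of the system~(\ref{eq:EDP_x}) against $dy$ on $\R$ and exploiting the definition of $A_t'$ to cancel the transport term.

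First I would expand the divergence in~(\ref{eq:EDP_x}) into its $x$- and $y$-components, so that
\begin{equation*}
\partial_t \psi = \partial_x(\partial_x V \, \psi + \partial_x \psi) + \partial_y(\partial_y V \, \psi + \partial_y \psi) - \partial_x(A_t'\psi),
\end{equation*}
and then integrate both sides with respect to $y \in \R$. The $\partial_y(\cdot)$ term integrates to boundary contributions at $y = \pm\infty$ which vanish under the decay/regularity assumption made at the beginning of Section~\ref{sec:pres_res} on $\psi(t,\cdot)$. Swapping $\partial_x$ with the integral (legitimate under the same smoothness hypothesis) yields
\begin{equation*}
\partial_t \psi^\xi = \partial_x\!\left(\int_\R \partial_x V(x,y)\,\psi(t,x,y)\,dy\right) + \partial_{x,x}\psi^\xi - \partial_x\!\bigl(A_t'\,\psi^\xi\bigr).
\end{equation*}

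The crucial step is then to notice that the definition of $A_t'$ in~(\ref{eq:EDP_x}) is precisely
\begin{equation*}
A_t'(x)\,\psi^\xi(t,x) = \int_\R \partial_x V(x,y)\,\psi(t,x,y)\,dy,
\end{equation*}
so the first and third terms on the right-hand side cancel, leaving $\partial_t \psi^\xi = \partial_{x,x}\psi^\xi$. Conceptually, this cancellation is the whole point of introducing the adaptive biasing force: it is chosen exactly so that the mean drift of the $\xi$-marginal vanishes and only pure diffusion remains.

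There is no genuine obstacle here beyond justifying the formal manipulations (interchange of derivative and integral, vanishing of boundary terms in~$y$), which is covered by the standing regularity/decay assumptions stated in Section~\ref{sec:pres_res}. This simple calculation is the baby version of Proposition~\ref{prop:psi_xi}; in the general case one would use the co-area formula in place of the integration in~$y$, and a similar cancellation between the conditional-expectation drift and the flux through level sets $\Sigma_z$ would leave the one-dimensional Fokker--Planck equation~(\ref{eq:EDP_psi_xi}) on $\mathcal{M}$.
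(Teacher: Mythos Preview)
Your argument is correct and is exactly the approach the paper takes: it states that the lemma ``is simply obtained by integration of~(\ref{eq:EDP_x}) with respect to~$y \in \R$,'' and your explicit computation (vanishing of the $\partial_y(\cdot)$ boundary terms and cancellation of $\int_\R \partial_x V\,\psi\,dy$ against $A_t'\psi^\xi$ via the definition of $A_t'$) spells out precisely this one-line remark.
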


As stated in Corollary~\ref{cor:W}, this result already yields the exponential
convergence to zero of the macroscopic Fisher information
$I(\psi^\xi|\psi^\xi_\infty)$ (this is the matter of
Lemma~\ref{lem:fisher_macro_T} below), and thus [H4] is indeed satisfied
with $I_0=I(\psi^\xi(0,\cdot)|\psi^\xi_\infty)$ and $r=4
\pi^2$.

A fundamental lemma needed in the sequel is
\begin{lemma}\label{lem:A_t'-A'_x}
The difference between the biasing force $A_t'$ and the mean
force $A'$ can be expressed in term of the densities as
\begin{align*}
A_t'-A' &= \int_{\R} \partial_x \ln \left(\frac{\psi}{\psi_\infty} \right)
\frac{\psi}{\psi^\xi} \, dy - \partial_x \ln \left(\frac{\psi^\xi}{\psi^\xi_\infty} \right).
\end{align*}
\end{lemma}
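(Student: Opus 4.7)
The proof is a direct computation using the explicit forms of $\psi_\infty$ and $\psi^\xi_\infty$ in this simplified setting. The plan is to expand the two logarithmic derivatives on the right-hand side, recognize $A_t'(x)$ and $A'(x)$ inside the $y$-integral, and then observe that the remaining marginal terms cancel.

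First, I would write down the densities at equilibrium for $\xi(x,y)=x$, $W=0$, ${\mathcal M}=\T$. Since $Z^\xi = |\T| = 1$ and $\psi^\xi_\infty(x) = \exp(-W(x))/Z^\xi \equiv 1$, we have
\begin{equation*}
\psi_\infty(x,y) = Z^{-1}\exp(-V(x,y) + A(x)), \qquad \psi^\xi_\infty(x) \equiv 1.
\end{equation*}
In particular, $\partial_x \ln \psi_\infty(x,y) = -\partial_x V(x,y) + A'(x)$, and $\partial_x \ln \psi^\xi_\infty \equiv 0$.

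Next, I would expand the first integrand:
\begin{equation*}
\partial_x \ln\!\left(\frac{\psi}{\psi_\infty}\right) = \frac{\partial_x \psi}{\psi} + \partial_x V - A'(x),
\end{equation*}
multiply by $\psi/\psi^\xi$, and integrate with respect to $y\in\R$. Using $\int_\R \psi(t,x,y)\,dy = \psi^\xi(t,x)$ and, by differentiation under the integral, $\int_\R \partial_x \psi\,dy = \partial_x \psi^\xi$, together with the definition of $A_t'(x)$ in~\eqref{eq:EDP_x}, this yields
\begin{equation*}
\int_{\R} \partial_x \ln\!\left(\frac{\psi}{\psi_\infty}\right) \frac{\psi}{\psi^\xi} \, dy \;=\; \frac{\partial_x \psi^\xi}{\psi^\xi} + A_t'(x) - A'(x).
\end{equation*}

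Finally, since $\psi^\xi_\infty \equiv 1$, the second term on the right-hand side of the lemma reduces to $\partial_x \ln \psi^\xi = \partial_x \psi^\xi/\psi^\xi$, which cancels exactly the marginal contribution above and leaves $A_t'(x) - A'(x)$. There is no real obstacle here; the computation is essentially bookkeeping. The only point that must be handled carefully is the differentiation under the integral sign in $y$, which is justified by the smoothness and decay assumptions on $\psi$ stated at the beginning of Section~\ref{sec:pres_res}. This identity is the key to the proof because it rewrites the pointwise defect $A_t' - A'$ as a conditional average of $\partial_x \ln(\psi/\psi_\infty)$ minus a purely macroscopic contribution, which is exactly the structure needed to bound $\|A_t' - A'\|$ in terms of the microscopic Fisher information later on.
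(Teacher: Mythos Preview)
Your proof is correct and follows essentially the same route as the paper's: both expand $\partial_x\ln(\psi/\psi_\infty)$ using the explicit form $\partial_x\ln\psi_\infty=-\partial_x V+A'$, integrate against the conditional density $\psi/\psi^\xi$, and observe that the marginal term $\partial_x\psi^\xi/\psi^\xi$ cancels with $\partial_x\ln(\psi^\xi/\psi^\xi_\infty)$ since $\psi^\xi_\infty\equiv 1$. Your additional remarks on differentiation under the integral and on the role of the identity are accurate but not needed for the bare computation.
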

\begin{proof}
This is a simple computation (using the fact that $\psi^\xi_\infty \equiv 1$):
\begin{align*}
\int_{\R} \partial_x \ln \left(\frac{\psi}{\psi_\infty} \right)
\frac{\psi}{\psi^\xi} \, dy - \partial_x \ln
\left(\frac{\psi^\xi}{\psi^\xi_\infty} \right)
&=\int_{\R} \partial_x \ln \psi \frac{\psi}{\psi^\xi} \, dy
- \int_{\R} \partial_x \ln \psi_\infty \frac{\psi}{\psi^\xi} \, dy -
\partial_x \ln \psi^\xi,\\
&= \int_{\R} \frac{\partial_x \psi}{\psi^\xi}\, dy + \int_{\R} \partial_x( V
- A) \frac{\psi}{\psi^\xi} \, dy -
\partial_x \ln \psi^\xi,\\
&= A_t' - A'.
\end{align*}
\end{proof}

We will also use the following two estimates:
\begin{lemma}\label{lem:estim_1_x}
Let us assume~[H2]--[H3]. Then, for
all $t \geq 0$, for
all $x \in \T$,
$$|A_t'(x)-A'(x)| \leq \|\partial_{x,y} V \|_{L^\infty} \sqrt{
  \frac{2}{\rho} e_m(t,x) }.$$
\end{lemma}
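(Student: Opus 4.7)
The plan is to convert the difference $A_t'(x) - A'(x)$ into an integral against the signed measure $\mu_{t,x} - \mu_{\infty,x}$, bound this by a Wasserstein distance via Kantorovich-Rubinstein duality, and then pass to entropy via Otto-Villani.

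First, since in this simple setting $F(x,y) = \partial_x V(x,y)$ and both $\mu_{t,x}$ and $\mu_{\infty,x}$ are probability measures on $\Sigma_x = \{x\} \times \R$, the definitions~(\ref{eq:A_prime_bis}) and~(\ref{eq:A_prime_t}) give
$$A_t'(x) - A'(x) = \int_{\R} \partial_x V(x, y) \, d(\mu_{t,x} - \mu_{\infty,x})(y).$$
Viewed as a function of $y \in \R$ (identifying $\Sigma_x$ with $\R$), the map $y \mapsto \partial_x V(x,y)$ has Lipschitz constant bounded by $\|\partial_{x,y} V\|_{L^\infty}$, which is precisely the constant $M$ from assumption~[H2] in this setting (since $P \nabla = \partial_y$ when $\xi(x,y)=x$).

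Second, by the Kantorovich-Rubinstein duality for the $1$-Wasserstein distance, for any $1$-Lipschitz function $g$ on $\Sigma_x$,
$$\left| \int g \, d(\mu_{t,x} - \mu_{\infty,x}) \right| \leq W_1(\mu_{t,x}, \mu_{\infty,x}) \leq W(\mu_{t,x}, \mu_{\infty,x}),$$
the second inequality being Jensen's comparison between the $1$- and $2$-Wasserstein distances. Applied to $g = \partial_x V(x, \cdot) / \|\partial_{x,y} V\|_{L^\infty}$, this yields
$$|A_t'(x) - A'(x)| \leq \|\partial_{x,y} V\|_{L^\infty} \, W(\mu_{t,x}, \mu_{\infty,x}).$$

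Third, by assumption~[H3], $\mu_{\infty,x}$ satisfies LSI($\rho$), and Lemma~\ref{lem:otto_villani} (Otto-Villani) then gives T($\rho$):
$$W(\mu_{t,x}, \mu_{\infty,x}) \leq \sqrt{\frac{2}{\rho} H(\mu_{t,x} \mid \mu_{\infty,x})} = \sqrt{\frac{2}{\rho} \, e_m(t,x)}.$$
Combining these two inequalities gives the claimed bound. The argument is essentially a packaging of standard ingredients, so no step is genuinely delicate; the only point requiring care is to recognize that the Lipschitz constant of $F = \partial_x V$ restricted to the fiber $\Sigma_x$ is controlled by the mixed derivative $\partial_{x,y} V$, which is precisely what [H2] provides.
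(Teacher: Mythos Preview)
Your proof is correct and follows essentially the same route as the paper. The paper works directly with an arbitrary coupling $\pi \in \Pi(\mu_{t,x},\mu_{\infty,x})$, bounds $|\partial_x V(x,y)-\partial_x V(x,y')|$ by $\|\partial_{x,y}V\|_{L^\infty}|y-y'|$, applies Cauchy--Schwarz, and takes the infimum over couplings; this is just the coupling-side proof of the Kantorovich--Rubinstein plus $W_1\le W_2$ step you invoke by name, and both conclude identically via Lemma~\ref{lem:otto_villani}.
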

\begin{proof}
For any coupling measure $\pi \in \Pi(\mu_{t,x},\mu_{\infty,x})$, it holds:
\begin{align*}
|A_t'(x)-A'(x)|
&= \left|\int_{\R \times \R} \left( \partial_x V (x,y) - \partial_x V (x,y') \right) \pi(dy,dy')
\right|,\\
& \leq \|\partial_{x,y} V \|_{L^\infty} \int_{\R \times \R} |y-y'| \pi(dy,dy'),\\
& \leq \|\partial_{x,y} V \|_{L^\infty} \sqrt{\int_{\R \times \R} |y-y'|^2 \pi(dy,dy')}.
\end{align*}
Taking now the infimum over all $\pi \in \Pi(\mu_{t,x},\mu_{\infty,x})$
and using~[H3] together with Lemma~\ref{lem:otto_villani}, we obtain
\begin{equation*}
|A_t'(x)-A'(x)| \leq \|\partial_{x,y} V \|_{L^\infty}
W(\mu_{t,x},\mu_{\infty,x}) \leq \|\partial_{x,y} V \|_{L^\infty} \sqrt{ \frac{2}{\rho} H(\mu_{t,x} | \mu_{\infty,x}) },
\end{equation*}
which concludes the proof.
\end{proof}
\begin{lemma}\label{lem:estim_2_x}
Let us assume~[H3]. Then for all $t \geq 0$,
$$E_m(t) \leq \frac{1}{2 \rho} \int_{\T \times \R}  \left| \partial_y \ln \left( \frac{\psi}{\psi_\infty} \right)
\right|^2 \psi.$$
\end{lemma}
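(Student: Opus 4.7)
The plan is to unpack the definition of $E_m(t)$ and apply the logarithmic Sobolev inequality on each slice $\Sigma_x = \{x\} \times \R$ provided by assumption [H3]. Because the setting of the lemma is the simple product geometry $\T \times \R$ with $\xi(x,y)=x$, the tangential gradient $\nabla_{\Sigma_x}$ reduces to $\partial_y$, $|\nabla \xi| \equiv 1$, and $\psi^\xi_\infty \equiv 1$, so all the geometric factors trivialize.

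The first step is to rewrite the conditional densities. By definition,
\[
\frac{d\mu_{t,x}}{d\mu_{\infty,x}}(y) = \frac{\psi(t,x,y)/\psi^\xi(t,x)}{\psi_\infty(x,y)},
\]
and since $\psi^\xi(t,x)$ does not depend on $y$, differentiation in $y$ gives
\[
\partial_y \ln \frac{d\mu_{t,x}}{d\mu_{\infty,x}} = \partial_y \ln \frac{\psi}{\psi_\infty}.
\]
Hence the conditional Fisher information has the explicit form
\[
I(\mu_{t,x}|\mu_{\infty,x}) = \int_{\R} \left| \partial_y \ln \frac{\psi}{\psi_\infty} \right|^2 \frac{\psi(t,x,y)}{\psi^\xi(t,x)} \, dy.
\]

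The second step applies [H3]: each $\mu_{\infty,x}$ satisfies LSI($\rho$), so
\[
e_m(t,x) = H(\mu_{t,x}|\mu_{\infty,x}) \leq \frac{1}{2\rho}\, I(\mu_{t,x}|\mu_{\infty,x}).
\]
Multiplying by $\psi^\xi(t,x)$ and integrating over $x \in \T$, the factors of $\psi^\xi(t,x)$ cancel against those in the denominator of the Fisher information, leaving
\[
E_m(t) = \int_{\T} e_m(t,x)\,\psi^\xi(t,x)\,dx \leq \frac{1}{2\rho} \int_{\T} \!\int_{\R} \left|\partial_y \ln \frac{\psi}{\psi_\infty}\right|^2 \psi(t,x,y)\, dy\, dx,
\]
which is exactly the claimed inequality.

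There is no real obstacle here beyond bookkeeping: the only subtlety is recognizing that $\partial_y$ is the correct tangential gradient on $\Sigma_x$ and that the $y$-independence of $\psi^\xi(t,x)$ converts $\partial_y \ln(\psi/(\psi^\xi \psi_\infty))$ into $\partial_y \ln(\psi/\psi_\infty)$. The lemma is essentially an integrated form of the slice-wise LSI, and the main work has already been done by assuming [H3]. The genuinely delicate analogue in the general case (Section~\ref{sec:proof_gen}) will require handling $|\nabla \xi|$ factors and the surface measure $d\sigma_{\Sigma_z}$, but in the present simple setting these complications do not appear.
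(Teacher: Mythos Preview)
Your proof is correct and follows essentially the same route as the paper's own proof: apply the slice-wise LSI from [H3] to bound $e_m(t,x)$ by the conditional Fisher information, then observe that $\psi^\xi$ and $\psi^\xi_\infty$ are $y$-independent so that $\partial_y \ln\bigl((\psi/\psi^\xi)/(\psi_\infty/\psi^\xi_\infty)\bigr)=\partial_y\ln(\psi/\psi_\infty)$, and finally integrate in $x$ against $\psi^\xi$ to cancel the denominator. The only cosmetic difference is that you simplify the $y$-derivative before applying LSI whereas the paper does it afterward.
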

\begin{proof}
Using~[H3], it holds:
\begin{align*}
E_m
&= \int_{\T} e_m \psi^\xi \, dx, \\
&\leq  \int_{\T}  \frac{1}{2 \rho } \int_{\R} \left| \partial_y \ln \left(
  \frac{\psi}{\psi^\xi} \Big/
  \frac{\psi_\infty}{\psi^\xi_\infty} \right) \right|^2
\frac{\psi}{\psi^\xi} \, dy \, \psi^\xi \, dx,
\end{align*}
which yields the result since
$\psi^\xi / \psi^\xi_\infty$ does not depend on $y$.
\end{proof}

We are now in position to prove the exponential convergence of $E_m(t)$
to zero stated in Theorem~\ref{theo:CV}
(see Equation~(\ref{eq:CV_mic_ent})).

Equation~(\ref{eq:EDP_x}) on $\psi$ can be rewritten as:
$$\partial_t \psi=\div( \psi_\infty \nabla ( \psi / \psi_\infty)) +
  \partial_x( (A' - A_t') \psi).$$
Notice that the derivative $\frac{d E}{dt}$ can be obtained by
multiplying this equation by $\ln \left(
  \frac{\psi}{\psi_\infty} \right)$ and integrating over $\T \times \R$.
Thus, one obtains after some integrations by parts, using a
Cauchy-Schwarz inequality (to prove that~(\ref{eq:2}) is non positive) and Lemma~\ref{lem:A_t'-A'_x} (used twice):
\begin{align}
\frac{d E_m}{dt} &= \frac{d E}{dt} - \frac{d E_M}{dt},\nonumber\\
& = - \int_{\T} \int_{\R} \left| \nabla \ln \left( \frac{\psi}{\psi_\infty} \right)
\right|^2 \psi + \int_{\T} \int_{\R} (A_t' - A') \partial_x \ln \left(
  \frac{\psi}{\psi_\infty} \right) \psi + \int_{\T} \left| \partial_x \ln
  \left( \frac{\psi^\xi}{\psi^\xi_\infty} \right)
\right|^2 \psi^\xi,\nonumber\\
&= - \int_{\T} \int_{\R} \left| \partial_y \ln \left( \frac{\psi}{\psi_\infty} \right)
\right|^2 \psi \nonumber \\
&\quad - \int_{\T} \int_{\R} \left| \partial_x \ln \left( \frac{\psi}{\psi_\infty} \right)
\right|^2 \psi + \int_{\T} \left( \int_{\R} \partial_x \ln \left(
  \frac{\psi}{\psi_\infty} \right) \psi \, dy \right)^2
\frac{1}{\psi^\xi} \, dx \label{eq:2}\\
& \quad - \int_{\T} \int_{\R} \partial_x \ln \left(\frac{\psi^\xi}{\psi^\xi_\infty} \right)\partial_x \ln \left(
  \frac{\psi}{\psi_\infty} \right) \psi + \int_{\T} \left| \partial_x \ln
  \left( \frac{\psi^\xi}{\psi^\xi_\infty} \right)
\right|^2 \psi^\xi,\nonumber \\
& \leq - \int_{\T} \int_{\R} \left| \partial_y \ln \left( \frac{\psi}{\psi_\infty} \right)
\right|^2 \psi - \int_{\T}  \partial_x \ln
  \left( \frac{\psi^\xi}{\psi^\xi_\infty} \right)   \psi^\xi 
 (A_t'-A'). \nonumber
\end{align}
We now use Lemmas~\ref{lem:estim_1_x} and~\ref{lem:estim_2_x}:
\begin{align*}
\frac{d E_m}{dt}
& \leq - 2 \rho E_m +  \sqrt{\int_{\T} \left| A_t'-A' \right|^2
\psi^\xi} \sqrt{\int_{\T} \left| \partial_x \ln
  \left( \frac{\psi^\xi}{\psi^\xi_\infty} \right)
\right|^2 \psi^\xi},\\
 & \leq - 2 \rho E_m + \|\partial_{x,y} V \|_{L^\infty}  \sqrt{
  \frac{2}{\rho} E_m } \sqrt{I(\psi^\xi | \psi^\xi_\infty)}.
\end{align*}
Using [H4], we thus have:
$$
\frac{d  \sqrt{E_m}}{dt} \leq - \rho\sqrt{E_m} + \|\partial_{x,y} V
\|_{L^\infty} \sqrt{\frac{I_0}{2\rho}} \exp(-r t),
$$
from which we deduce~(\ref{eq:CV_mic_ent}).

Equation~(\ref{eq:CV_MF_1}) is then easily obtained using Lemma~\ref{lem:estim_1_x}.

\subsection{Proof of Proposition~\ref{prop:psi_xi} and Theorem~\ref{theo:CV} in the general case}\label{sec:proof_gen}

We now present the proof of Proposition~\ref{prop:psi_xi} and
Theorem~\ref{theo:CV} in the more general setting of
Section~\ref{sec:CV}. The proof follows the same lines as in the simple
case presented in Section~\ref{sec:proof_x}, but with additional
difficulties related to the geometry of the submanifolds~$\Sigma_z$.

We need the following result
\begin{lemma}\label{lem:psi_bar_prime}
The derivative of $\psi^\xi$ with respect to the reaction
coordinate value reads:
$$\partial_z \psi^\xi(t,z)=\int_{\Sigma_z} \left( 
  \frac{\nabla \xi \cdot \nabla \psi(t,\cdot)}{|\nabla \xi|^2} + \div
  \left(\frac{\nabla \xi}{|\nabla \xi|^2} \right) \psi(t,\cdot) 
\right)|\nabla \xi|^{-1} d\sigma_{\Sigma_z}.$$
\end{lemma}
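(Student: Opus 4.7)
The natural route is a weak/distributional computation: test $\psi^\xi$ against a smooth compactly supported function $\varphi:\mathcal{M}\to\R$, commute derivatives via the chain rule, integrate by parts in $\mathcal{D}$, and recover the claimed formula via the co-area formula (Appendix~\ref{sec:co-area}). I will suppress the $t$-dependence below since it plays no role.

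First I would start from the identity that defines $\psi^\xi$ through co-area, namely
\begin{equation*}
\int_{\mathcal{M}} \varphi(z)\, \psi^\xi(z)\, dz \;=\; \int_{\mathcal{D}} (\varphi\circ\xi)(x)\, \psi(x)\, dx,
\end{equation*}
and apply the same identity with $\varphi$ replaced by $\varphi'$. The key algebraic trick is then to observe that, by the chain rule,
\begin{equation*}
\varphi'(\xi(x)) \;=\; \nabla(\varphi\circ\xi)(x)\cdot\frac{\nabla\xi(x)}{|\nabla\xi(x)|^2},
\end{equation*}
because $\nabla(\varphi\circ\xi)=\varphi'(\xi)\,\nabla\xi$ and $|\nabla\xi|>0$ by [H1]. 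Substituting this into $\int_{\mathcal{D}} \varphi'(\xi)\,\psi\, dx$ rewrites the integrand as $\nabla(\varphi\circ\xi)\cdot(|\nabla\xi|^{-2}\nabla\xi)\,\psi$, which is ready for an integration by parts.

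Next I would integrate by parts in $\mathcal{D}$ (boundary terms vanish by the assumed decay/reflection conditions discussed before the statement of the PDE~\eqref{eq:EDP}) to obtain
\begin{equation*}
\int_{\mathcal{D}} \varphi'(\xi)\,\psi\, dx \;=\; -\int_{\mathcal{D}} (\varphi\circ\xi)\,\div\!\left(\frac{\nabla\xi}{|\nabla\xi|^2}\,\psi\right)dx.
\end{equation*}
Expanding the divergence as $\frac{\nabla\xi\cdot\nabla\psi}{|\nabla\xi|^2}+\psi\,\div\!\bigl(\nabla\xi/|\nabla\xi|^2\bigr)$ and then applying the co-area formula again (now to the right-hand side, which is the integral of a function of the form $(\varphi\circ\xi)\cdot g$), gives
\begin{equation*}
\int_{\mathcal{M}} \varphi'(z)\,\psi^\xi(z)\, dz \;=\; -\int_{\mathcal{M}} \varphi(z)\!\int_{\Sigma_z}\!\!\left(\frac{\nabla\xi\cdot\nabla\psi}{|\nabla\xi|^2}+\psi\,\div\!\left(\frac{\nabla\xi}{|\nabla\xi|^2}\right)\right)|\nabla\xi|^{-1} d\sigma_{\Sigma_z}\, dz.
\end{equation*}

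Finally I would integrate the left-hand side by parts in $z$ to get $-\int_{\mathcal{M}}\varphi(z)\,\partial_z\psi^\xi(z)\,dz$ and identify integrands by arbitrariness of $\varphi$, yielding exactly the claimed formula. The only subtle point is justifying the boundary terms (both for the integration by parts in $\mathcal{D}$ and the one in $\mathcal{M}$), but this is precisely the regularity/decay framework the authors place themselves in; beyond that the argument is a routine application of the chain rule, one integration by parts, and the co-area formula used twice.
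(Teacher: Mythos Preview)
Your proof is correct and follows essentially the same route as the paper: test against a smooth function of $z$, pass to an integral over $\mathcal{D}$ via the co-area formula, rewrite $\varphi'\circ\xi$ as $\nabla(\varphi\circ\xi)\cdot\nabla\xi/|\nabla\xi|^2$, integrate by parts in $\mathcal{D}$, and return to $\mathcal{M}$ by a second application of co-area. The paper's argument is identical up to notation (it uses $g$ instead of $\varphi$) and simply stops at the weak identity $\int_{\mathcal{M}}\psi^\xi g'\,dz=-\int_{\mathcal{M}}g\,[\cdots]\,dz$ without spelling out your final integration by parts in $z$.
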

\begin{proof}
For any smooth test function $g:\mathcal{M} \to \R$, we obtain (using the co-area
formula~(\ref{eq:co-area}) and an integration by parts):
\begin{align*}
\int_{\mathcal M} \psi^\xi(t,z)& g'(z) \, dz
=\int_{\mathcal D} \psi(t,x) g' \circ \xi (x) \, dx,\\
&=\int_{\mathcal D} \psi(t,x) \nabla (g \circ \xi) \cdot \nabla \xi |\nabla \xi|^{-2}
(x) \, dx,\\
&=- \int_{\mathcal D} \div\left(\frac{\psi(t,\cdot) \nabla \xi}{|\nabla \xi|^{2}} \right)
 g \circ \xi \, dx,\\
&=-\int_{\mathcal M} g(z) \int_{\Sigma_z} \left( 
  \frac{\nabla \xi \cdot \nabla \psi(t,\cdot)}{|\nabla \xi|^2} + \div
  \left(\frac{\nabla \xi}{|\nabla \xi|^2} \right) \psi(t,\cdot) 
\right)|\nabla \xi|^{-1} d\sigma_{\Sigma_z} \, dz,
\end{align*}
which yields the result.
\end{proof}
Using this lemma, it can be shown that $\psi^\xi$ satisfies a simple diffusion
equation, which is Proposition~\ref{prop:psi_xi}.
\begin{lemma}\label{lem:psi_bar}
The density $\psi^\xi$ satisfies the following diffusion equation on $\mathcal{M}$:
\begin{equation}\label{eq:psi_bar}
\partial_t \psi^\xi = \partial_{z}\left( W' \psi^\xi + \partial_{z} \psi^\xi \right).
\end{equation}
\end{lemma}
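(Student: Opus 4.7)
The plan is to test the PDE~\eqref{eq:EDP} against an arbitrary smooth compactly supported function $g \circ \xi$ (with $g:\mathcal{M}\to\R$), use integration by parts in $\mathcal{D}$ (no boundary terms, as assumed in Section~\ref{sec:pres_res}), then apply the co-area formula to re-express the bulk integrals as integrals over level sets $\Sigma_z$, and finally identify the resulting expressions using the definition of $A_t'$ in~\eqref{eq:EDP} and the formula for $\partial_z \psi^\xi$ from Lemma~\ref{lem:psi_bar_prime}. The conclusion will follow by the arbitrariness of $g$.

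More concretely, I would first compute
\begin{equation*}
\int_{\mathcal{M}} \partial_t \psi^\xi(t,z)\, g(z)\, dz
= \int_{\mathcal{D}} \partial_t \psi(t,x)\, g\circ\xi(x)\, dx,
\end{equation*}
and substitute the PDE~\eqref{eq:EDP} for $\partial_t \psi$. Integration by parts yields
\begin{equation*}
-\int_{\mathcal{D}} |\nabla \xi|^{-2} \Big(\nabla(V - A_t\circ\xi + W\circ\xi)\psi + \nabla \psi\Big)\cdot \nabla(g\circ\xi)\,dx,
\end{equation*}
and using $\nabla(g\circ\xi) = (g'\circ\xi)\,\nabla\xi$ together with the identity $\nabla V \cdot \nabla \xi / |\nabla\xi|^2 = F + \div(\nabla\xi / |\nabla\xi|^2)$ (directly from the definition~\eqref{eq:F} of $F$), the integrand involving $\nabla V$ and the drifts becomes $(g'\circ\xi)\bigl[F + \div(\nabla\xi/|\nabla\xi|^2) - A_t'\circ\xi + W'\circ\xi\bigr]\psi$.

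The main step, and where the ABF principle shows up, is to apply the co-area formula to push everything to $\Sigma_z$-integrals: the term $\int_{\Sigma_z} F \psi |\nabla\xi|^{-1}d\sigma_{\Sigma_z}$ is exactly $A_t'(z)\,\psi^\xi(z)$ by definition~\eqref{eq:EDP} of $A_t'$, so it cancels with the contribution of $-A_t'$. The remaining $\div(\nabla\xi/|\nabla\xi|^2)\psi$ term combines with the $\nabla\psi \cdot \nabla\xi / |\nabla\xi|^2$ term from the diffusion part to produce exactly the integrand in Lemma~\ref{lem:psi_bar_prime}, giving $\partial_z \psi^\xi$. I am thus left with
\begin{equation*}
\int_{\mathcal{M}} \partial_t \psi^\xi(t,z)\, g(z)\, dz = -\int_{\mathcal{M}} g'(z)\bigl[W'(z)\,\psi^\xi(t,z) + \partial_z \psi^\xi(t,z)\bigr]\,dz,
\end{equation*}
and a further integration by parts in $z$ together with the arbitrariness of $g$ yields~\eqref{eq:psi_bar}.

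The main obstacle is purely bookkeeping: correctly matching the ABF cancellation $\int_{\Sigma_z} F\psi |\nabla\xi|^{-1}\,d\sigma_{\Sigma_z} = A_t'\,\psi^\xi$ and recognizing the combination $\nabla\xi\cdot\nabla\psi / |\nabla\xi|^2 + \div(\nabla\xi/|\nabla\xi|^2)\psi$ as the integrand of $\partial_z \psi^\xi$ in Lemma~\ref{lem:psi_bar_prime}. Once these identifications are made, the equation~\eqref{eq:psi_bar} falls out with no further work, and the fact that no boundary terms appear either on $\partial \mathcal{D}$ (by the standing regularity/decay assumptions of Section~\ref{sec:pres_res}) or on $\partial \mathcal{M}$ (which is empty when $\mathcal{M} = \T$, and handled by decay when $\mathcal{M}=\R$) is what makes the argument go through cleanly.
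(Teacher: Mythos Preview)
Your proposal is correct and follows essentially the same route as the paper's proof: test the PDE against $g\circ\xi$, integrate by parts, pass to level-set integrals via the co-area formula, use the definition of $A_t'$ to cancel the $F$-contribution, and recognize the remaining combination $\nabla\psi\cdot\nabla\xi/|\nabla\xi|^2 + \div(\nabla\xi/|\nabla\xi|^2)\psi$ as the integrand in Lemma~\ref{lem:psi_bar_prime}. The only cosmetic difference is that you rewrite $\nabla V\cdot\nabla\xi/|\nabla\xi|^2 = F + \div(\nabla\xi/|\nabla\xi|^2)$ \emph{before} applying co-area, whereas the paper first pushes everything to $\Sigma_z$ and then substitutes the definition of $A_t'\psi^\xi$; the computations are otherwise identical.
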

\begin{proof}
For any smooth test function $g:\mathcal{M} \to \R$, we have (using the co-area
formula~(\ref{eq:co-area}),~(\ref{eq:EDP}), an integration by parts and finally Lemma~\ref{lem:psi_bar_prime}):
\begin{align*}
\frac{d}{dt} &\int_{\mathcal M} \psi^\xi(t,\cdot) g \, dz
= \frac{d}{dt} \int_{\mathcal D} \psi(t,\cdot) g \circ \xi \, dx,\\
&= \int_{\mathcal D} \div\left(|\nabla \xi|^{-2} \left(\nabla (V-A_t\circ \xi+ W
    \circ \xi) \psi + 
  \nabla \psi \right)\right) g \circ \xi \, dx,\\
&= - \int_{\mathcal D}  |\nabla \xi|^{-2} \left(\nabla (V-A_t\circ \xi+ W
    \circ \xi) \psi + 
  \nabla \psi \right) \cdot \nabla \xi \, g' \circ \xi \, dx,\\
&= - \int_{\mathcal D}  |\nabla \xi|^{-2} \left(\nabla V \cdot \nabla \xi  \psi + 
  \nabla \psi \cdot \nabla \xi \right)  \, g' \circ \xi \, dx \\
&\quad + \int_{\mathcal D} A_t '\circ \xi g' \circ \xi \psi \, dx - \int_{\mathcal D} W' \circ \xi g' \circ \xi \psi \, dx,\\
&= - \int_{\mathcal M}  \int_{\Sigma_z} |\nabla \xi|^{-3} \left(\nabla V \cdot \nabla \xi  \psi + 
  \nabla \psi \cdot \nabla \xi \right)  \, d \sigma_{\Sigma_z} g'(z)  \,
dz \\
& \quad + \int_{\mathcal M} A_t '(z) g' (z)  \psi^\xi(z) \, dz - \int_{\mathcal M} W'(z) g' (z)  \psi^\xi(z) \, dz,\\
&=- \int_{\mathcal M}  \int_{\Sigma_z} \left( |\nabla \xi|^{-3}  
  \nabla \psi \cdot \nabla \xi   + \div(\nabla \xi |\nabla \xi|^{-2})
  |\nabla \xi|^{-1} \psi \right)\, d \sigma_{\Sigma_z} g'(z)  \,
dz\\
& \quad -  \int_{\mathcal M} W'(z) \psi^\xi(z) g' (z)   \, dz, \\
&= - \int_{\mathcal M} \left( \partial_z \psi^\xi(t,z) + W'(z) \psi^\xi(z)\right) g' (z)   \, dz,
\end{align*}
which is a weak formulation of~(\ref{eq:psi_bar}).
\end{proof}

As stated in Corollary~\ref{cor:W}, this result already yields the exponential
convergence to zero of the macroscopic Fisher information
$I(\psi^\xi|\psi^\xi_\infty)$ under adequate assumption on~$W$ (this is
the matter of [H4'] and Lemma~\ref{lem:fisher_macro_R} below). We
suppose in the following that [H4] is indeed satisfied.

The equivalent of Lemma~\ref{lem:A_t'-A'_x} writes
\begin{lemma}\label{lem:A_t'-A'}
The difference between the biasing force $A_t'$ and the mean
force $A'$ can be expressed in term of the densities as
\begin{align*}
A_t'(z)-A'(z) &= \int_{\Sigma_z} \frac{\nabla \xi}{|\nabla \xi|} \cdot \nabla \ln \left(\frac{\psi}{\psi_\infty} \right)
\frac{\psi}{\psi^\xi} |\nabla \xi|^{-2} \, d \sigma_{\Sigma_z} - \partial_z \ln \left(\frac{\psi^\xi}{\psi^\xi_\infty} \right).
\end{align*}
\end{lemma}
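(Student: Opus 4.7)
The plan is to start from the right-hand side and reduce it to the left-hand side, using the explicit forms of $\psi_\infty$ and $\psi_\infty^\xi$, the formula for $\partial_z \psi^\xi$ from Lemma~\ref{lem:psi_bar_prime}, and the definition of $F$. Throughout I take $\beta = 1$ as in~(\ref{eq:hyp_beta}).

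First I would compute $\nabla \ln (\psi/\psi_\infty)$ explicitly. Since $\psi_\infty = (Z Z^\xi)^{-1} \exp(-(V - A \circ \xi + W \circ \xi))$, one has
\[
\nabla \ln \frac{\psi}{\psi_\infty} = \nabla \ln \psi + \nabla V - (A' - W')\circ \xi \, \nabla \xi.
\]
Taking the inner product with $\nabla \xi / |\nabla \xi|$, multiplying by $|\nabla \xi|^{-2}\psi/\psi^\xi$ and integrating over $\Sigma_z$ against $d\sigma_{\Sigma_z}$, the term involving $(A'-W')\circ \xi$ produces $-(A'(z) - W'(z))$ since $\int_{\Sigma_z} \psi |\nabla \xi|^{-1}/\psi^\xi\, d\sigma_{\Sigma_z} = 1$ by definition of $\mu_{t,z}$.

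Next I would handle the two remaining integrals. The term with $\nabla V \cdot \nabla \xi / |\nabla \xi|^2$ integrated against $d\mu_{t,z}$ differs from $A_t'(z)$ precisely by $\int_{\Sigma_z} \div(\nabla \xi / |\nabla \xi|^2)\, d\mu_{t,z}$, thanks to the definition~(\ref{eq:F}) of $F$ and the definition~(\ref{eq:EDP}) of $A_t'$. The term with $\nabla \xi \cdot \nabla \psi / |\nabla \xi|^3$ is read via Lemma~\ref{lem:psi_bar_prime}: dividing that identity by $\psi^\xi(t,z)$ gives
\[
\frac{\partial_z \psi^\xi}{\psi^\xi}(t,z) = \int_{\Sigma_z} \frac{\nabla \xi \cdot \nabla \psi}{|\nabla \xi|^3}\frac{d\sigma_{\Sigma_z}}{\psi^\xi(t,z)} + \int_{\Sigma_z} \div\!\left(\frac{\nabla \xi}{|\nabla \xi|^2}\right)\, d\mu_{t,z}.
\]
So the unwanted divergence term from the $\nabla V$ contribution cancels against its counterpart here, leaving exactly $\partial_z \ln \psi^\xi(t,z)$.

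Summing these pieces gives, for the first term of the RHS, $A_t'(z) - A'(z) + W'(z) + \partial_z \ln \psi^\xi(t,z)$. Subtracting $\partial_z \ln(\psi^\xi/\psi_\infty^\xi)$ and using $\psi_\infty^\xi(z) = (Z^\xi)^{-1}\exp(-W(z))$, so that $\partial_z \ln \psi_\infty^\xi = -W'$, makes the $W'$ term and the $\partial_z \ln \psi^\xi$ term cancel, yielding $A_t'(z) - A'(z)$. The only mildly delicate step is keeping track of the divergence term $\div(\nabla \xi / |\nabla \xi|^2)$, which appears once from the definition of $F$ and once from Lemma~\ref{lem:psi_bar_prime}, and ensuring their signs match so that they disappear cleanly; this is the main point to verify carefully, but it is mechanical once the identities above are written down.
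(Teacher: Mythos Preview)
Your proof is correct and follows essentially the same route as the paper: start from the right-hand side, expand $\nabla\ln(\psi/\psi_\infty)$ using the explicit form of $\psi_\infty$, handle the $\nabla\ln\psi$ term via Lemma~\ref{lem:psi_bar_prime}, recognize the $\nabla V\cdot\nabla\xi/|\nabla\xi|^2$ term as $A_t'(z)$ plus a divergence correction from the definition of $F$, and finally use $\partial_z\ln\psi^\xi_\infty = -W'$ to cancel the remaining terms. The paper does the same computation with only cosmetic differences in the order in which the pieces are split.
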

\begin{proof}
Using Lemma~\ref{lem:psi_bar_prime} and the definition of $A_t'$, it holds:
\begin{align*}
\int_{\Sigma_z} &\frac{\nabla \xi}{|\nabla \xi|} \cdot \nabla \ln \left(\frac{\psi}{\psi_\infty} \right)
\frac{\psi}{\psi^\xi} |\nabla \xi|^{-2} \, d \sigma_{\Sigma_z} -
\partial_z \ln \left(\frac{\psi^\xi}{\psi^\xi_\infty}
\right)\\
&=\int_{\Sigma_z} \frac{\nabla \xi}{|\nabla \xi|} \cdot \nabla \ln \psi
\frac{\psi}{\psi^\xi} |\nabla \xi|^{-2} \, d \sigma_{\Sigma_z}- \int_{\Sigma_z} \frac{\nabla \xi}{|\nabla \xi|} \cdot \nabla \ln \psi_\infty
\frac{\psi}{\psi^\xi} |\nabla \xi|^{-2} \, d \sigma_{\Sigma_z} \\
& \quad - \partial_z \ln \psi^\xi + \partial_z \ln \psi^\xi_\infty , \\
&=\frac{1}{\psi^\xi} \int_{\Sigma_z} \frac{\nabla \xi \cdot \nabla \psi}{|\nabla \xi|} 
|\nabla \xi|^{-2} \, d \sigma_{\Sigma_z} \\
&\quad + \int_{\Sigma_z} \frac{\nabla \xi}{|\nabla
  \xi|} \cdot \nabla \left( V - A \circ \xi + W \circ \xi\right)
\frac{\psi}{\psi^\xi} |\nabla \xi|^{-2} \, d \sigma_{\Sigma_z} -
\partial_z \ln \psi^\xi - W'(z), \\
&=\frac{\partial_z \psi^\xi}{\psi^\xi} -
\frac{1}{\psi^\xi} \int_{\Sigma_z} \div \left( \frac{\nabla \xi}{|\nabla \xi|^2}\right) 
|\nabla \xi|^{-1} \psi \, d \sigma_{\Sigma_z}+ \int_{\Sigma_z} \frac{\nabla \xi
  \cdot \nabla V}{|\nabla
  \xi|^3}
\frac{\psi}{\psi^\xi} \, d \sigma_{\Sigma_z} \\
& \quad - A'(z) -
\partial_z \ln \psi^\xi,\\
&=A_t'(z)-A'(z).
\end{align*}
\end{proof}
The equivalent of Lemmas~\ref{lem:estim_1_x} and~\ref{lem:estim_2_x} write:
\begin{lemma}\label{lem:estim_1}
Let us assume~[H2]--[H3]. Then for
all $t \geq 0$, for
all $z \in \mathcal{M}$,
$$|A_t'(z)-A'(z)| \leq M \sqrt{
  \frac{2}{\rho} e_m(t,z) }.$$
\end{lemma}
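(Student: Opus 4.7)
The plan is to generalize the argument from Lemma~\ref{lem:estim_1_x}, replacing the Euclidean distance on $\R$ by the geodesic distance $d_{\Sigma_z}$ on the submanifold $\Sigma_z$. The three ingredients are: (i) rewriting the difference $A_t'(z)-A'(z)$ as an integral against a coupling of $\mu_{t,z}$ and $\mu_{\infty,z}$; (ii) Lipschitz control of $F$ along $\Sigma_z$ provided by assumption [H2]; (iii) the logarithmic Sobolev inequality [H3] combined with the Otto--Villani theorem (Lemma~\ref{lem:otto_villani}) to go from Wasserstein distance to relative entropy.

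First, using the expressions $A_t'(z)=\int_{\Sigma_z} F\, d\mu_{t,z}$ and $A'(z)=\int_{\Sigma_z} F\, d\mu_{\infty,z}$, I would observe that for any coupling $\pi\in\Pi(\mu_{t,z},\mu_{\infty,z})$,
\begin{equation*}
A_t'(z)-A'(z)=\int_{\Sigma_z\times\Sigma_z}\bigl(F(x)-F(y)\bigr)\,d\pi(x,y).
\end{equation*}

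Next, I would upgrade the pointwise bound $\|\nabla_{\Sigma_z} F\|_{L^\infty}\le M$ from [H2] to a global Lipschitz estimate with respect to the intrinsic geodesic distance: for any smooth curve $w\in\mathcal{C}^1([0,1],\Sigma_z)$ joining $x$ to $y$, the fundamental theorem of calculus on $\Sigma_z$ gives $F(y)-F(x)=\int_0^1 \nabla_{\Sigma_z}F(w(s))\cdot\dot w(s)\,ds$, hence $|F(y)-F(x)|\le M\int_0^1|\dot w(s)|\,ds$; taking the infimum over such curves yields $|F(x)-F(y)|\le M\,d_{\Sigma_z}(x,y)$. Plugging this into the previous identity and applying Cauchy--Schwarz,
\begin{equation*}
|A_t'(z)-A'(z)|\le M\int_{\Sigma_z\times\Sigma_z}d_{\Sigma_z}(x,y)\,d\pi(x,y)\le M\sqrt{\int_{\Sigma_z\times\Sigma_z}d_{\Sigma_z}(x,y)^2\,d\pi(x,y)}.
\end{equation*}

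Finally, I would take the infimum over $\pi\in\Pi(\mu_{t,z},\mu_{\infty,z})$, which by the very definition of the Wasserstein distance on the Riemannian manifold $\Sigma_z$ gives $|A_t'(z)-A'(z)|\le M\,W(\mu_{t,z},\mu_{\infty,z})$. Invoking [H3] (LSI($\rho$) for $\mu_{\infty,z}$) together with Lemma~\ref{lem:otto_villani}, the Talagrand inequality $W(\mu_{t,z},\mu_{\infty,z})\le\sqrt{2 H(\mu_{t,z}\mid\mu_{\infty,z})/\rho}=\sqrt{2 e_m(t,z)/\rho}$ delivers the claimed bound.

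The only non-routine point is verifying that the Otto--Villani theorem applies in this Riemannian setting; this is precisely why the distance $d_{\Sigma_z}$, the Wasserstein distance $W$, and the surface Fisher information (with $\nabla_{\Sigma_z}=P\nabla$) were all set up intrinsically on $\Sigma_z$ in Section~\ref{sec:entropy}. Given that $\Sigma_z$ is complete and locally compact with the induced Riemannian metric, the LSI-implies-Talagrand implication of Lemma~\ref{lem:otto_villani} is valid, and no further technicality is required.
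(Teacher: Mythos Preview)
Your proof is correct and follows essentially the same route as the paper: coupling representation of $A_t'(z)-A'(z)$, Lipschitz control of $F$ via $\|\nabla_{\Sigma_z}F\|_{L^\infty}\le M$, passage to the Wasserstein distance, and then LSI($\rho$) $\Rightarrow$ T($\rho$) via Lemma~\ref{lem:otto_villani}. The only difference is cosmetic: you spell out the Lipschitz bound $|F(x)-F(y)|\le M\,d_{\Sigma_z}(x,y)$ via the fundamental theorem of calculus along curves, whereas the paper jumps directly to the estimate.
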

\begin{proof}
For any coupling measure $\pi \in \Pi(\mu_{t,z},\mu_{\infty,z})$ defined
on $\Sigma_z \times \Sigma_z$, it holds:
\begin{align*}
|A_t'(z)-A'(z)|
&= \left|\int_{\Sigma_z \times \Sigma_z} ( F(x) - F(x') ) \pi(dx,dx')
\right|,  \\
& \leq \left\|\nabla_{\Sigma_z} F  \right\|_{L^\infty} \sqrt{\int_{\Sigma_z \times \Sigma_z}
  d_{\Sigma_z}(x,x')^2 \pi(dx,dx')}. 
\end{align*}
Taking now the infimum over all $\pi \in \Pi(\mu_{t,z},\mu_{\infty,z})$
and using~[H2]--[H3] together with Lemma~\ref{lem:otto_villani},
we thus obtain
\begin{equation}\label{eq:1}
|A_t'(z)-A'(z)| \leq M W(\mu_{t,z},\mu_{\infty,z}) \leq M \sqrt{ \frac{2}{\rho} H(\mu_{t,z} | \mu_{\infty,z}) },
\end{equation}
which concludes the proof.
\end{proof}
\begin{lemma}\label{lem:estim_2}
Let us assume~[H3]. Then for all $t \geq 0$,
$$E_m(t) \leq \frac{1}{2 \rho} \int_{\mathcal D}  \left| \nabla_{\Sigma_z} \ln\left( \frac{\psi(t,\cdot)}{\psi_\infty}\right)
\right|^2 \psi.$$
\end{lemma}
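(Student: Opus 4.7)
The plan is to apply the logarithmic Sobolev inequality pointwise in $z$ to the conditional measure $\mu_{t,z}$, then integrate against the marginal $\psi^\xi(t,z)\,dz$ and recognize the resulting integral via the co-area formula.

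First, by assumption [H3], for every $z \in \mathcal{M}$ the reference measure $\mu_{\infty,z}$ satisfies LSI($\rho$), so
\[
e_m(t,z) = H(\mu_{t,z} \mid \mu_{\infty,z}) \leq \frac{1}{2\rho}\, I(\mu_{t,z} \mid \mu_{\infty,z}).
\]
Here the Fisher information $I(\mu_{t,z}\mid\mu_{\infty,z})$ is computed with the intrinsic gradient $\nabla_{\Sigma_z}=P\nabla$ on the submanifold $\Sigma_z$. The key observation (this is the small technical point) is that, although $d\mu_{t,z}/d\mu_{\infty,z}$ equals $(\psi/\psi^\xi)/(\psi_\infty/\psi^\xi_\infty)$, the factor $\psi^\xi(t,z)/\psi^\xi_\infty(z)$ is a function of $z=\xi(x)$ only, hence of the form $g\circ\xi$; and since $P\nabla(g\circ\xi)=g'(\xi)\,P\nabla\xi=0$, its tangential gradient vanishes on $\Sigma_z$. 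Therefore
\[
I(\mu_{t,z}\mid\mu_{\infty,z}) = \int_{\Sigma_z}\left|\nabla_{\Sigma_z}\ln\!\left(\frac{\psi(t,\cdot)}{\psi_\infty}\right)\right|^{2}\frac{\psi(t,\cdot)\,|\nabla\xi|^{-1}\,d\sigma_{\Sigma_z}}{\psi^\xi(t,z)},
\]
as already recorded just after the definition~(\ref{eq:fisher}).

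Next I integrate against $\psi^\xi(t,z)\,dz$. The $\psi^\xi(t,z)$ in the denominator of the conditional measure cancels with the outer weight, leaving
\[
E_m(t) = \int_{\mathcal{M}} e_m(t,z)\,\psi^\xi(t,z)\,dz \leq \frac{1}{2\rho}\int_{\mathcal{M}}\!\!\int_{\Sigma_z}\!\left|\nabla_{\Sigma_z}\ln\!\left(\frac{\psi}{\psi_\infty}\right)\right|^{2}\!\psi\,|\nabla\xi|^{-1}\,d\sigma_{\Sigma_z}\,dz.
\]
Finally, the co-area formula (Appendix~\ref{sec:co-area}) converts the iterated integral on the right-hand side back into an integral over $\mathcal{D}$: applied to $g=|\nabla_{\Sigma_z}\ln(\psi/\psi_\infty)|^{2}\psi$, it gives
\[
\int_{\mathcal{M}}\!\!\int_{\Sigma_z} g\,|\nabla\xi|^{-1}\,d\sigma_{\Sigma_z}\,dz = \int_{\mathcal{D}} g\,dx,
\]
which yields the claimed bound.

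The only potential obstacle is the invariance step above, namely the identification of $|\nabla_{\Sigma_z}\ln(d\mu_{t,z}/d\mu_{\infty,z})|^2$ with $|\nabla_{\Sigma_z}\ln(\psi/\psi_\infty)|^2$. Once this is observed, the remainder is a one-line application of LSI($\rho$) followed by the co-area formula, exactly mirroring Lemma~\ref{lem:estim_2_x} (where $\psi^\xi_\infty\equiv 1$ made the issue invisible).
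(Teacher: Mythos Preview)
Your proof is correct and follows essentially the same approach as the paper's: apply LSI($\rho$) pointwise in $z$ to bound $e_m(t,z)$ by the conditional Fisher information, identify the latter with the tangential-gradient expression using the fact that $\psi^\xi/\psi^\xi_\infty$ depends only on $\xi$ (so its $\nabla_{\Sigma_z}$ vanishes), integrate against $\psi^\xi\,dz$, and finish with the co-area formula. The paper's proof is terser because it quotes the formula for $I(\mu_{t,z}\mid\mu_{\infty,z})$ already displayed after~(\ref{eq:fisher}), whereas you spell out the invariance step explicitly, but the argument is the same.
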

\begin{proof}
Using~[H3], it follows:
\begin{align*}
E_m
&= \int_{\mathcal M} e_m \psi^\xi \, dz, \\
&\leq  \int_{\mathcal M}  \frac{1}{2 \rho } \int_{\Sigma_z}
\left|\nabla_{\Sigma_z} \ln\left( \frac{\psi(t,\cdot)}{\psi_\infty}\right)\right|^2 \frac{\psi(t,\cdot) |\nabla
\xi|^{-1} d \sigma_{\Sigma_z}}{\psi^\xi(t,z)} \, \psi^\xi \, dz,
\end{align*}
which yields the result, using the co-area formula~(\ref{eq:co-area}).
\end{proof}

We are now in position to prove the exponential convergence of $E_m(t)$ to
zero stated in
Theorem~\ref{theo:CV} (see Equation~(\ref{eq:CV_mic_ent})). Equation~(\ref{eq:EDP}) on $\psi$ can be
rewritten as:
$$\partial_t \psi=\div( |\nabla \xi|^{-2} \psi_\infty \nabla ( \psi / \psi_\infty)) +
  \div( |\nabla \xi|^{-2} \nabla \left( (A - A_t)\circ \xi\right) \, \psi).$$
Notice that the derivative $\frac{d E}{dt}$ can be obtained by
multiplying this equation by $\ln \left(
  \frac{\psi}{\psi_\infty} \right)$ and integrating over ${\mathcal D}$.
Thus, one obtains after some integrations by parts, using the co-area
formula~(\ref{eq:co-area}) and Lemma~\ref{lem:A_t'-A'}:
\begin{align*}
 \frac{d E_m}{dt} &=  \frac{d E}{dt} - \frac{d E_M}{dt},\\
& = - \int_{\mathcal D} \left| \nabla \ln \left( \frac{\psi}{\psi_\infty} \right)
\right|^2 |\nabla \xi|^{-2} \psi + \int_{\mathcal D} (A_t' - A')\circ \xi \, \nabla
\xi \cdot \nabla \ln \left(
  \frac{\psi}{\psi_\infty} \right) |\nabla \xi|^{-2} \psi \\
& \quad + \int_{\mathcal M} \left| \partial_z \ln
  \left( \frac{\psi^\xi}{\psi^\xi_\infty} \right)
\right|^2 \psi^\xi,\\
&= - \int_{\mathcal D} \left| \nabla_{\Sigma_z} \ln \left( \frac{\psi}{\psi_\infty} \right)
\right|^2 |\nabla \xi|^{-2} \psi - \int_{\mathcal D} \left( \frac{\nabla \xi}{|\nabla
    \xi|} \cdot \nabla \ln \left( \frac{\psi}{\psi_\infty} \right)
\right)^2 |\nabla \xi|^{-2} \psi \\
& \quad + \int_{\mathcal M} (A_t' - A')(z) \int_{\Sigma_z} \frac{\nabla
\xi}{|\nabla \xi|} \cdot \nabla \ln \left(
  \frac{\psi}{\psi_\infty} \right) |\nabla \xi|^{-2} \psi
d\sigma_{\Sigma_z} dz \\
& \quad + \int_{\mathcal M} \left| \partial_z \ln
  \left( \frac{\psi^\xi}{\psi^\xi_\infty} \right)
\right|^2 \psi^\xi, \\
&= - \int_{\mathcal D} \left| \nabla_{\Sigma_z} \ln \left( \frac{\psi}{\psi_\infty} \right)
\right|^2 |\nabla \xi|^{-2} \psi - \int_{\mathcal D} \left( \frac{\nabla \xi}{|\nabla
    \xi|} \cdot \nabla \ln \left( \frac{\psi}{\psi_\infty} \right)
\right)^2 |\nabla \xi|^{-2} \psi \\
& \quad + \int_{\mathcal M} \left( \int_{\Sigma_z} \frac{\nabla
\xi}{|\nabla \xi|} \cdot \nabla \ln \left(
  \frac{\psi}{\psi_\infty} \right) |\nabla \xi|^{-2} \psi
d\sigma_{\Sigma_z} \right)^2 (\psi^\xi)^{-1} dz \\
&\quad - \int_{\mathcal M}  \int_{\Sigma_z} \frac{\nabla
\xi}{|\nabla \xi|} \cdot \nabla \ln \left(
  \frac{\psi}{\psi_\infty} \right) |\nabla \xi|^{-2} \psi
d\sigma_{\Sigma_z}  \partial_z \ln
  \left( \frac{\psi^\xi}{\psi^\xi_\infty} \right) dz\\
&\quad + \int_{\mathcal M} \left| \partial_z \ln
  \left( \frac{\psi^\xi}{\psi^\xi_\infty} \right)
\right|^2 \psi^\xi.
\end{align*}
Using the Cauchy-Schwarz inequality:
\begin{align*}
 \Bigg( &\int_{\Sigma_z} \frac{\nabla
\xi}{|\nabla \xi|} \cdot \nabla \ln \left(
  \frac{\psi}{\psi_\infty} \right) |\nabla \xi|^{-1} \frac{ |\nabla \xi|^{-1} \psi
d\sigma_{\Sigma_z}}{\psi^\xi(z)} \Bigg)^2 \\
&\leq \int_{\Sigma_z} \left(  \frac{\nabla
\xi}{|\nabla \xi|} \cdot \nabla \ln \left(
  \frac{\psi}{\psi_\infty} \right) |\nabla \xi|^{-1} \right)^2 \frac{ |\nabla \xi|^{-1} \psi
d\sigma_{\Sigma_z}}{\psi^\xi(z)}
\end{align*}
 and Lemma~\ref{lem:A_t'-A'} again, we thus obtain
\begin{align*}
 \frac{d E_m}{dt} 
& \leq - \int_{\mathcal D} \left| \nabla_{\Sigma_z} \ln \left( \frac{\psi}{\psi_\infty} \right)
\right|^2 |\nabla \xi|^{-2} \psi - \int_{\mathcal M}  \partial_z \ln
  \left( \frac{\psi^\xi}{\psi^\xi_\infty} \right)   \psi^\xi 
 (A_t'-A').
\end{align*}
We now use~[H2], Lemmas~\ref{lem:estim_1} and~\ref{lem:estim_2}:
\begin{align*}
 \frac{d E_m}{dt} 
& \leq - 2 \rho m^{-2} E_m +  \sqrt{\int_{\mathcal M} \left| A_t'-A' \right|^2
\psi^\xi} \sqrt{\int_{\mathcal M} \left| \partial_z \ln
  \left( \frac{\psi^\xi}{\psi^\xi_\infty} \right)
\right|^2 \psi^\xi},\\
 & \leq - 2 \rho m^{-2} E_m + M \sqrt{
  \frac{2}{\rho} E_m }\sqrt{I(\psi^\xi | \psi^\xi_\infty)}.
\end{align*}
Using~[H4], we thus have:
$$
 \frac{d \sqrt{E_m}}{dt}  \leq - \rho m^{-2} \sqrt{E_m} + M
\sqrt{\frac{I_0}{2\rho}} \exp(- r t),
$$
from which we deduce~(\ref{eq:CV_mic_ent}).

Equation~(\ref{eq:CV_MF_1}) is then easily obtained using Lemma~\ref{lem:estim_1}.

\subsection{Proof of Corollary~\ref{cor:W}}\label{sec:proof_cor}

\subsubsection{Convergence of the macroscopic Fisher information}\label{sec:cv_info_fisher}

Let us first show that in both cases considered in~[H4'],
the exponential convergence~[H4] of the macroscopic
Fisher information indeed holds.

Let us first consider the case ${\mathcal M}=\T$ and $W=0$. We know
from~(\ref{eq:EDP_psi_xi}) that $\psi^\xi$ satisfies $\partial_t \psi^\xi =
\partial_{z,z} \psi^\xi$ on $\T$, and we would like to show exponential
convergence of the Fisher information $I(\psi^\xi(t,\cdot)|\psi^\xi_\infty)$.

\begin{lemma}[Convergence of the Fisher information when ${\mathcal
    M}=\T$ and $W=0$]\label{lem:fisher_macro_T}
Let $\phi$ be a function defined for $t \geq 0$ and $x \in \T$ which satisfies
$$\partial_t \phi = \partial_{x,x} \phi \text{ on $\T$}$$
and such that $\int_{\T} \phi(0,\cdot)=1$, $\phi(0,\cdot)$ is non
negative, and $I(\phi(0,\cdot)|\phi_\infty)<\infty$, where $\phi_\infty
\equiv 1$ is the longtime limit of $\phi$. Then, $\forall t \geq 0$,
$$I(\phi(t,\cdot)|\phi_\infty)\leq
I(\phi(0,\cdot)|\phi_\infty) \exp(-8 \pi^2 t).$$
\end{lemma}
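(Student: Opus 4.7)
The plan is to establish the differential inequality $\frac{d}{dt} I(\phi(t,\cdot)|\phi_\infty) \leq -8\pi^2 \, I(\phi(t,\cdot)|\phi_\infty)$ and conclude by a Gronwall argument. Since $\phi_\infty \equiv 1$, we have $I(\phi(t,\cdot)|\phi_\infty) = \int_{\T} (\partial_x \phi)^2/\phi \, dx$, and I note that the smoothing effect of the heat equation ensures $\phi(t,\cdot)$ is smooth and strictly positive for $t>0$, so all the manipulations below are justified (up to considering the problem on $[t_*,\infty)$ for some $t_*>0$, as in an earlier remark of the paper).

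I would first differentiate $I$ in time, using $\partial_t \phi = \partial_{xx}\phi$ and periodic integration by parts (no boundary terms). The computation yields
\begin{equation*}
\frac{d}{dt} I = -2\int_{\T} \frac{(\partial_{xx}\phi)^2}{\phi}\,dx + \int_{\T} \frac{(\partial_x \phi)^2 \partial_{xx}\phi}{\phi^2}\,dx.
\end{equation*}
The right-hand side is awkward because its sign is not obvious. The key trick is to introduce $w = \sqrt{\phi}$: a direct calculation (using $\partial_x \phi = 2 w \partial_x w$, $\partial_{xx}\phi = 2(\partial_x w)^2 + 2 w \partial_{xx} w$, and the identity $\int (\partial_x w)^2 \partial_{xx} w / w \, dx = \tfrac{1}{3}\int (\partial_x w)^4/w^2\,dx$ obtained by the substitution $u_x^2 u_{xx} = \tfrac{1}{3}\partial_x(u_x^3)$) rearranges the formula into the manifestly signed form
\begin{equation*}
\frac{d}{dt} I = -8 \int_{\T} (\partial_{xx} w)^2\,dx - \frac{8}{3}\int_{\T}\frac{(\partial_x w)^4}{w^2}\,dx,
\end{equation*}
while $I = 4\int_{\T} (\partial_x w)^2\,dx$. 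Dropping the second (negative) term gives the clean bound $\frac{d}{dt}I \leq -8\int_{\T}(\partial_{xx}w)^2\,dx$.

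Next, I would apply the Poincar\'e inequality on $\T$ to the function $\partial_x w$: since $w$ is periodic, $\int_{\T}\partial_x w \, dx = 0$, and the smallest non-zero eigenvalue of $-\partial_{xx}$ on $\T = \R/\Z$ is $4\pi^2$, so
\begin{equation*}
\int_{\T} (\partial_x w)^2\,dx \leq \frac{1}{4\pi^2} \int_{\T} (\partial_{xx} w)^2\,dx.
\end{equation*}
Combining, $\frac{d}{dt}I \leq -8\cdot 4\pi^2 \int_{\T}(\partial_x w)^2\,dx = -8\pi^2 I$, and Gronwall's lemma gives the claimed exponential decay.

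The computational heart of the argument is the identity $\frac{d}{dt}I = -8\int(\partial_{xx}w)^2 - \tfrac{8}{3}\int (\partial_x w)^4/w^2$; discovering the substitution $w=\sqrt\phi$ (equivalently, working with $\ln\phi$ and then rewriting) is the only nontrivial insight, since a naive Bakry--\'Emery approach on $\T$ gives $\rho = 0$ (the uniform measure has no positive Ricci-type curvature). Once this identity is in hand, the rest is a clean application of the standard Poincar\'e inequality on the circle, which delivers exactly the sharp rate $8\pi^2$ announced in the lemma.
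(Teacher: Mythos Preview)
Your proof is correct and follows essentially the same approach as the paper's: both rely on the substitution $w=\sqrt{\phi}$ (the paper calls it $u$), the identity $\frac{d}{dt}I = -8\int_{\T}(\partial_{xx}w)^2 - \tfrac{8}{3}\int_{\T}(\partial_x w)^4/w^2$, and the Poincar\'e--Wirtinger inequality on $\T$ applied to $\partial_x w$. The only cosmetic difference is the order of operations: the paper introduces $u=\sqrt{\phi}$ from the start, writes down its evolution equation $\partial_t u = \partial_{xx}u + (\partial_x u)^2/u$, and differentiates $\int(\partial_x u)^2$ directly, whereas you first differentiate $I$ in the $\phi$-variables and then substitute.
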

\begin{proof}
Let us denote $u=\sqrt{\phi}$.
We notice that $I(\phi|\phi_\infty)=\int_{\T} |\partial_x \ln \phi|^2 \phi =4 \int_{\T}
|\partial_x u|^2$. Moreover, we have from~(\ref{eq:psi_bar_x})
$$\partial_t u = \partial_{x,x} u + \frac{(\partial_x u)^2}{u}.$$
Therefore,
\begin{align*}
\frac{d}{dt} \int_{\T} (\partial_x u)^2
&= 2 \int_{\T} \partial_{x,x,x} u \, \partial_x u + 2 \int_{\T} \partial_x\left(
  \frac{(\partial_x u)^2}{u}\right) \partial_x u,\\
&= - 2 \int_{\T} (\partial_{x,x} u)^2 -2 \int_{\T}  \frac{(\partial_x
  u)^2}{u}\partial_{x,x} u,\\
&= - 2 \int_{\T} (\partial_{x,x} u)^2 -2 \int_{\T}  \frac{\partial_x ( (\partial_x
  u)^3 )}{3u},\\
&= - 2 \int_{\T} (\partial_{x,x} u)^2 - \frac{2}{3} \int_{\T}  \frac{(\partial_x
  u)^4}{u^2},\\
& \leq - 8 \pi^2 \int_{\T} (\partial_x u)^2,
\end{align*}
where we have used the Poincaré-Wirtinger inequality on $\T$, applied to
$\partial_x u$: For any function $f
\in H^1(\T)$,
$$\int_{\T} \left(f - \int_{\T} f\right)^2 \leq \frac{1}{4\pi^2} \int_{\T} (\partial_x
f)^2.$$
\end{proof}

Let us now consider the case ${\mathcal M}=\R$ and $W \neq 0$ which is such that $W''$ is
  bounded from below and $\frac{\exp(-\beta
    W)}{\int_{\mathcal M} \exp(-\beta W)}$ satisfies a logarithmic Sobolev
  inequality (as stated in~[H4']). We know
from~(\ref{eq:EDP_psi_xi}) that $\psi^\xi$ satisfies $\partial_t \psi^\xi =
\partial_{z}\left( W' \psi^\xi + \partial_{z} \psi^\xi \right)$ on~$\R$, and we would like to show exponential
convergence of the Fisher information~$I(\psi^\xi(t,\cdot)|\psi^\xi_\infty)$.

\begin{lemma}[Convergence of the Fisher information when ${\mathcal
    M}=\R$ and $W\neq0$]\label{lem:fisher_macro_R}
Let $\phi$ be a function defined for $t \geq
0$ and $x \in \R$ which satisfies
$$\partial_t \phi = \partial_{x} \left(W' \phi + \partial_x \phi \right) \text{ on $\R$,}$$
and such that $\int_{\R} \phi(0,\cdot)=1$, $\phi(0,\cdot)$ is non
negative, and $I(\phi(0,\cdot)|\phi_\infty)<\infty$, where $\phi_\infty
\equiv \frac{\exp(-W)}{\int_\R \exp(-W)}$ is the
longtime limit of $\phi$. Let us assume that $W''$ is
  bounded from below by a constant $\alpha$ and $\phi_\infty$ satisfies LSI($\overline{r}$), with $\overline{r}>0$. We can
  suppose without loss of generality that
$$\overline{r} \geq \alpha.$$
Then there exists $I_0>0$ and $r>0$ such that $\forall t \geq 0$,
$$I(\phi(t,\cdot)|\phi_\infty)\leq I_0 \exp(- 2 r t).$$
More precisely, when $\alpha = \overline{r} > 0$, it is possible to take
$I_0=I(\phi(0,\cdot)|\phi_\infty)$ and $r=\alpha$. When  $\alpha <
\overline{r}$, for any $ \varepsilon \in (0, \overline{r})$, it is possible
to choose $r=\overline{r} -
\varepsilon$ for a well-chosen constant $I_0>0$.
\end{lemma}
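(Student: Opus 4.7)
The plan is to combine a Bakry--Emery type $\Gamma_2$ computation for $\frac{d}{dt} I(\phi(t,\cdot)|\phi_\infty)$ with a Lyapunov functional that couples entropy and Fisher information, in order to leverage the logarithmic Sobolev inequality satisfied by $\phi_\infty$. Setting $g = \phi/\phi_\infty$, the Fokker--Planck equation rewrites as $\partial_t g = \partial_{xx} g - W'\, \partial_x g$, and standard integrations by parts against $\phi_\infty$ (assuming enough decay so that boundary terms vanish, as in the formal framework of Section~\ref{sec:pres_res}) yield the identity
\begin{equation*}
\frac{d}{dt} I(\phi(t,\cdot)|\phi_\infty) = -2 \int_\R (\partial_{xx} \ln g)^2\, \phi\, dx - 2 \int_\R W''\, (\partial_x \ln g)^2\, \phi\, dx.
\end{equation*}
Dropping the nonnegative Hessian term and using $W'' \geq \alpha$ gives the key differential inequality $\frac{d}{dt} I(\phi(t,\cdot)|\phi_\infty) \leq -2\alpha\, I(\phi(t,\cdot)|\phi_\infty)$.

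When $\alpha = \overline{r}$ (so in particular $\alpha > 0$), Gronwall directly yields $I(\phi(t,\cdot)|\phi_\infty) \leq I(\phi(0,\cdot)|\phi_\infty) \exp(-2\alpha t)$, which is the claim with $r = \alpha$ and $I_0 = I(\phi(0,\cdot)|\phi_\infty)$. When $\alpha < \overline{r}$, the constant $\alpha$ may be negative and the Bakry--Emery bound alone is insufficient; I would introduce the Lyapunov functional
\begin{equation*}
\mathcal{E}_\lambda(t) = I(\phi(t,\cdot)|\phi_\infty) + \lambda\, H(\phi(t,\cdot)|\phi_\infty), \qquad \lambda > \max(0, -2\alpha).
\end{equation*}
The entropy dissipation identity $\frac{d}{dt} H(\phi(t,\cdot)|\phi_\infty) = - I(\phi(t,\cdot)|\phi_\infty)$, combined with the Bakry--Emery inequality, gives $\frac{d\mathcal{E}_\lambda}{dt} \leq -(2\alpha + \lambda) I(\phi(t,\cdot)|\phi_\infty)$. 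On the other hand, LSI($\overline{r}$) yields $H(\phi(t,\cdot)|\phi_\infty) \leq I(\phi(t,\cdot)|\phi_\infty)/(2\overline{r})$, so that $I(\phi(t,\cdot)|\phi_\infty) \geq 2\overline{r}\,\mathcal{E}_\lambda/(2\overline{r}+\lambda)$. Substituting gives
\begin{equation*}
\frac{d\mathcal{E}_\lambda}{dt} \leq -\frac{2\overline{r}(2\alpha + \lambda)}{2\overline{r} + \lambda}\, \mathcal{E}_\lambda,
\end{equation*}
and since the rate $2\overline{r}(2\alpha+\lambda)/(2\overline{r}+\lambda)$ tends to $2\overline{r}$ as $\lambda \to \infty$, for any prescribed $\varepsilon \in (0, \overline{r})$ with $\overline{r} - \varepsilon > \alpha$ one may choose $\lambda$ large enough that this rate is at least $2(\overline{r} - \varepsilon)$. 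Gronwall on $\mathcal{E}_\lambda$ together with $I(\phi(t,\cdot)|\phi_\infty) \leq \mathcal{E}_\lambda(t)$ then yields the claim with $r = \overline{r} - \varepsilon$ and $I_0 = \mathcal{E}_\lambda(0) = I(\phi(0,\cdot)|\phi_\infty) + \lambda\, H(\phi(0,\cdot)|\phi_\infty)$; the remaining range $\overline{r} - \varepsilon \leq \alpha$ is automatically covered by the pure Bakry--Emery bound, which already gives the stronger rate $2\alpha$.

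The main obstacle is the rigorous justification of the $\Gamma_2$ identity, which requires enough regularity and decay of $g$ on $\R$ to discard all boundary terms in the integrations by parts. Under the standing smoothness and decay conventions of Section~\ref{sec:pres_res} this is harmless, and the finiteness of $\mathcal{E}_\lambda(0)$ follows from $I(\phi(0,\cdot)|\phi_\infty) < \infty$ combined with LSI($\overline{r}$). The normalization $\overline{r} \geq \alpha$ is a genuine WLOG assumption since, in the opposite case $\alpha > \overline{r}$, the Bakry--Emery criterion would automatically upgrade the LSI constant from $\overline{r}$ to $\alpha$, so one may replace $\overline{r}$ by $\alpha$ at no cost and be back in the equality case treated above.
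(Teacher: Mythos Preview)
Your proof is correct and follows essentially the same strategy as the paper: the $\Gamma_2$ identity for $\frac{d}{dt}I$, the direct Bakry--Emery bound in the case $\alpha=\overline{r}$, and a mixed entropy--Fisher Lyapunov functional combined with LSI($\overline{r}$) in the case $\alpha<\overline{r}$. The only cosmetic difference is the parametrization of the Lyapunov functional: the paper uses $H+\lambda I$ and sends $\lambda\to 0$, whereas you use $I+\lambda H$ and send $\lambda\to\infty$; the two are related by $\lambda\leftrightarrow 1/\lambda$ and yield the identical decay rate $\frac{1+2\alpha\lambda}{\lambda+1/(2\overline{r})}$ after rescaling. Your explicit treatment of the residual range $\overline{r}-\varepsilon\le\alpha$ via the pure Bakry--Emery bound is a small clarification the paper leaves implicit.
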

\begin{proof}
The fact that $\overline{r} \geq \alpha$ is clear since either $\alpha
\leq 0$, or $\alpha >0$ in which case it is well-known that $\phi_\infty$
satisfies LSI($\alpha$) (see for example~\cite{ABC-00}), so that one can choose at least $\overline{r}=\alpha$.

Let us recall the expression for the entropy $H(\phi(t,\cdot)|\phi_\infty)= \int_\R
\ln(\phi/\phi_\infty) \phi$ and the Fisher information
$I(\phi(t,\cdot)|\phi_\infty)=\int_\R |\partial_x \ln(\phi/\phi_\infty)|^2
\phi$. Since $\phi_\infty$ satisfies LSI($\overline{r}$), we have
$$H(\phi(t,\cdot)|\phi_\infty) \leq \frac{1}{2 \overline{r}}
I(\phi(t,\cdot)|\phi_\infty).$$
Moreover, by standard computations (see for example~\cite{arnold-markowich-toscani-unterreiter-01}), we have
$$\frac{d}{dt}H(\phi(t,\cdot)|\phi_\infty)=-I(\phi(t,\cdot)|\phi_\infty)$$
and
\begin{equation}\label{eq:dI_dt}
\frac{d}{dt}I(\phi(t,\cdot)|\phi_\infty)=-2 \int_\R
\frac{\phi}{\phi_\infty} \left|\partial_{x,x} \ln \left(
    \frac{\phi}{\phi_\infty}\right) \right|^2 \phi_\infty
-2 \int_\R\frac{\phi}{\phi_\infty} \left|\partial_{x} \ln\left(
\frac{\phi}{\phi_\infty}\right)\right|^2 W'' \phi_\infty.
\end{equation}
If $\alpha = \overline{r}$, we thus obtain from~(\ref{eq:dI_dt}) that
$\frac{d}{dt}I(\phi(t,\cdot)|\phi_\infty) \leq - 2 \alpha
I(\phi(t,\cdot)|\phi_\infty)$ which concludes the proof in this case.

Let us now suppose that $\alpha < \overline{r}$. The technique of proof
we propose is taken from~\cite{villani-06}. For any $\lambda >0$, we have
\begin{align*}
\frac{d}{dt} &\left( H(\phi(t,\cdot)|\phi_\infty) + \lambda
  I(\phi(t,\cdot)|\phi_\infty) \right)\\
&= - \int_\R\frac{\phi}{\phi_\infty} \left|\partial_{x} \ln\left(
\frac{\phi}{\phi_\infty}\right)\right|^2 \phi_\infty -2 \lambda \int_\R
\frac{\phi}{\phi_\infty} \left|\partial_{x,x} \ln \left(
    \frac{\phi}{\phi_\infty}\right) \right|^2 \phi_\infty\\
& \quad -2 \lambda \int_\R\frac{\phi}{\phi_\infty} \left|\partial_{x} \ln\left(
\frac{\phi}{\phi_\infty}\right)\right|^2 W'' \phi_\infty,\\
& \leq - \int_\R (1 + 2 \lambda W'') \frac{\phi}{\phi_\infty} \left|\partial_{x} \ln\left(
\frac{\phi}{\phi_\infty}\right)\right|^2 \phi_\infty, \\
& \leq - (1 + 2 \lambda \inf W'') I(\phi(t,\cdot)|\phi_\infty),\\
& \leq - \frac{1 + 2  \alpha \lambda}{\lambda + 1/(2 \overline{r})} \left( H(\phi(t,\cdot)|\phi_\infty) + \lambda
  I(\phi(t,\cdot)|\phi_\infty) \right).
\end{align*}
We thus obtain that, for any $\lambda >0$,
$$H(\phi(t,\cdot)|\phi_\infty) + \lambda I(\phi(t,\cdot)|\phi_\infty)
\leq \big( H(\phi(0,\cdot)|\phi_\infty) + \lambda
I(\phi(0,\cdot)|\phi_\infty) \big) \,
\exp\left( - \frac{1 + 2 \alpha \lambda}{\lambda + 1/(2 \overline{r})} t\right),
$$
and therefore
$$I(\phi(t,\cdot)|\phi_\infty)\leq
\left(\frac{1}{\lambda}H(\phi(0,\cdot)|\phi_\infty) +
  I(\phi(0,\cdot)|\phi_\infty) \right)\exp\left( - \frac{1 + 2 \alpha \lambda}{\lambda + 1/(2 \overline{r})} t\right).
$$
Since $\frac{1 + 2 \alpha \lambda}{\lambda + 1/(2 \overline{r})}$
goes to $2 \overline{r}$ when $\lambda$ goes to $0$, for any $\varepsilon
\in (0, \overline{r})$, one can find a $\lambda>0$ such that $\frac{1 + 2
  \alpha \lambda}{\lambda + 1/(2 \overline{r})}=2(\overline{r} -
\varepsilon)$, which concludes the proof.
\end{proof}

\subsubsection{Convergence of the biasing force}\label{sec:cv_A}

Let us now prove the convergence result~(\ref{eq:CV_MF_2}) for the biasing force.

In the case ${\mathcal M}=\T$ (and thus $W=0$), we can prove
the convergence of  $\|A_t' - A'\|_{L^2(\T)}$ to zero in the following sense (which
implies~(\ref{eq:CV_MF_2}), using~(\ref{eq:CV_mic_ent})): for any $\varepsilon \in (0,1)$, $\forall t \geq t_\varepsilon$,
\begin{equation}\label{eq:CV_MF_2_T}
 \|A_t' - A'\|_{L^2(\T)}^2 \leq \frac{2}{1-\varepsilon}\frac{M^2}{\rho}
 E_m(t),
\end{equation}
where $\displaystyle{t_\varepsilon=\min\left(0, (4 \pi^2)^{-1} \ln\left( \varepsilon^{-1}
    \sqrt{\int_\T(\partial_z \psi^\xi(0,\cdot))^2} \right)\right)}$. This is obtained using the fact that $\int_\T (\partial_x
\psi^\xi(t,\cdot))^2 \leq \int_\T (\partial_x
\psi^\xi(0,\cdot))^2 \exp(-8 \pi^2 t)$ (the proof of this estimate is similar to the
one of Lemma~\ref{lem:fisher_macro_T}) and the fact that for any function
$f \in H^1(\T)$,
$$\left\| f - \int_\T f \right\|_{L^\infty}^2 \leq \int_\T (\partial_x f)^2,$$
applied to $f=\psi^\xi$. Thus we have $\left\| \psi^\xi - 1 \right\|_{L^\infty}^2 \leq \int_\T (\partial_x
\psi^\xi(0,\cdot))^2 \exp(-8 \pi^2 t)$ which implies that for $t \geq
t_\varepsilon$, $\psi^\xi(t,\cdot) \geq 1- \varepsilon$ which
yields~(\ref{eq:CV_MF_2_T}) from~(\ref{eq:CV_MF_1}).


Let us now prove~(\ref{eq:CV_MF_2}) in the case ${\mathcal M}=\R$, under
assumption~[H4'] on $W$.
Let us introduce a compact $K \subset \mathcal{M}$. Since $L^\infty(K)
\subset H^1(K)$ (with continuous injection), there exists $c>0$ such that
\begin{align*}
\left\|\frac{\psi^\xi}{\psi^\xi_\infty} -1 \right\|_{L^\infty(K)} 
&\leq c \left(\left\|\frac{\psi^\xi}{\psi^\xi_\infty} -1
  \right\|_{L^2(K)} +  \left\|\partial_z \left(\frac{\psi^\xi}{\psi^\xi_\infty} -1
 \right) \right\|_{L^2(K)}  \right),\\
& \leq \frac{c}{\inf_K \sqrt{\psi^\xi_\infty}} \left( \sqrt{\int_\R \left(\frac{\psi^\xi}{\psi^\xi_\infty} -1
  \right)^2 \psi^\xi_\infty}  + \sqrt{\int_\R \left(\partial_z \left(\frac{\psi^\xi}{\psi^\xi_\infty} -1
  \right) \right)^2 \psi^\xi_\infty} \right).
\end{align*}
Thus, for any $ \varepsilon \in (0,\overline{r})$, there exists $C>0$ such that
\begin{equation*}
\left\|\frac{\psi^\xi}{\psi^\xi_\infty} -1 \right\|_{L^\infty(K)} \leq C
\exp(- r t),
\end{equation*}
with $r=\overline{r}-\varepsilon$. This inequality is obtained from the fact that since
$\psi^\xi_\infty$ satisfies LSI($\overline{r}$), then $\psi^\xi_\infty$
also satisfies a Poincar\'e inequality with the same constant
$\overline{r}$ (see for example~\cite{ABC-00}), and a proof similar to
that of Lemma~\ref{lem:fisher_macro_R} for the convergence of the Fisher information $\displaystyle{\int_\R \left(\partial_z \left(\frac{\psi^\xi}{\psi^\xi_\infty} -1 \right) \right)^2 \psi^\xi_\infty}$
associated with the Poincar\'e inequality. Now, we write
\begin{align*}
\int_K |A_t' - A'| \psi^\xi_\infty
&= \int_K |A_t' - A'| \psi^\xi - \int_K |A_t' - A'| \left(
  \frac{\psi^\xi}{\psi^\xi_\infty} -1 \right) \psi^\xi_\infty,\\
& \leq \int_\R |A_t' - A'|^2 \psi^\xi + C \exp(- r t)\int_K |A_t' - A'| \psi^\xi_\infty.
\end{align*}
Thus, for $t$ sufficiently large, $\int_K |A_t' - A'| \psi^\xi_\infty$
is bounded from above by some constant times $\int_\R |A_t' - A'|^2 \psi^\xi$, which
yields~(\ref{eq:CV_MF_2}) (using~(\ref{eq:CV_MF_1}) and~(\ref{eq:CV_mic_ent})).


\subsection{Proof of Theorem~\ref{theo:CV_ori}}\label{sec:proof_original}

Let us now prove Theorem~\ref{theo:CV_ori}. We still assume, up to a
change of variable, that $\beta=1$. We have:
\begin{align*}
\frac{d E}{dt}
&=-\int_{\mathcal D} \left|\nabla \ln\left(\frac{\psi}{\psi_\infty}\right)\right|^2
\psi + \int_{\mathcal D} ( A_t' - A')\circ \xi \nabla \xi \cdot \nabla
\ln\left(\frac{\psi}{\psi_\infty}\right) \psi,\\
& \leq -\int_{\mathcal D} \left|\nabla \ln\left(\frac{\psi}{\psi_\infty}\right)\right|^2
\psi + \sqrt{\int_{\mathcal M} | A_t' - A' |^2 \psi^\xi} \sqrt{\int_{\mathcal D}
    \left| \nabla \xi \cdot \nabla
    \ln\left(\frac{\psi}{\psi_\infty}\right) \right|^2  \psi}.
\end{align*}
Since, by Lemmas~\ref{lem:estim_1} and~\ref{lem:estim_2},
\begin{align*}
\int_{\mathcal M} | A_t' - A' |^2 \psi^\xi
&\leq \frac{M^2}{\rho} \int_{\mathcal D} \left|\nabla_{\Sigma_z} \ln
  \left(\frac{\psi}{\psi_\infty} \right) \right|^2 \psi,
\end{align*}
we thus obtain
\begin{align*}
\frac{d E}{dt}
& \leq -\int_{\mathcal D} \left|\nabla \ln\left(\frac{\psi}{\psi_\infty}\right)\right|^2
\psi + \frac{M m}{\sqrt{\rho}} \sqrt{\int_{\mathcal D} \left|\nabla_{\Sigma_z} \ln
  \left(\frac{\psi}{\psi_\infty} \right) \right|^2 \psi} \sqrt{\int_{\mathcal D}
    \left| \frac{\nabla \xi}{|\nabla \xi|} \cdot \nabla
    \ln\left(\frac{\psi}{\psi_\infty}\right) \right|^2  \psi}\\
& \leq \left(-1 +  \frac{M m}{2\sqrt{\rho}} \right) \int_{\mathcal D} \left|\nabla
  \ln\left(\frac{\psi}{\psi_\infty}\right)\right|^2 \psi,
\end{align*}
where we have used the fact that, for any function $f:{\mathcal D} \to \R$, $|\nabla f|^2 = | \nabla_{\Sigma_z}
f|^2 + \left|\frac{\nabla \xi}{|\nabla \xi|} \cdot \nabla f \right|^2$.
The logarithmic Sobolev inequality with respect to $\psi_\infty$
(see~[H5]) concludes the proof.

\appendix
\section{The co-area formula}\label{sec:co-area}

The aim  of this section  is to state  the co-area formula for  a
function  $\xi :  {\mathcal D}  \to \R^p$,  (where $1  \leq  p <  n$) such  that
$\text{rank}(\nabla \xi)=p$. Classical proofs for the co-area formula can
be                    found                    in                    the
books~\cite{ambrosio-fusco-pallara-00,evans-gariepy-92}.   These  proofs
are however quite involved  since they assume only Lipschitz-regularity
for $\xi$.  The proof is simpler in the case of a
smooth~$\xi$: it can be done by an adequate parameterization and a simple change of variables. 

\begin{lemma}[co-area formula]
For any smooth function $\phi: \R^{n}  \to \R$,
\begin{equation}
\label{eq:co-area_p}
\int_{\R^{n}} \phi(x) \sqrt{\det G(x)} dx = \int_{\R^p}
\int_{\Sigma_z} \phi \, d\sigma_{\Sigma_z} \, dz,
\end{equation}
where $G$ is a $p\times p$ matrix with $G_{i,j}=\nabla \xi_i \cdot
\nabla \xi_j$. In the case $p=1$, Equation~(\ref{eq:co-area_p}) reads:
\begin{equation}
\label{eq:co-area}
\int_{\R^{n}} \phi(x) |\nabla \xi|(x) dx = \int_{\R}
\int_{\Sigma_z} \phi \, d\sigma_{\Sigma_z} \, dz,
\end{equation}
\end{lemma}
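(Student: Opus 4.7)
The strategy is to prove the formula locally in charts adapted to the level-set foliation, and then assemble the global identity via a partition of unity. Since $\mathrm{rank}(\nabla\xi) = p$, the implicit function theorem supplies, around any point of $\R^n$, a smooth diffeomorphism $\Phi : U \times V \to W \subset \R^n$ (with $U \subset \R^{n-p}$ and $V \subset \R^p$) such that $\xi \circ \Phi(y,z) = z$. For each fixed $z \in V$, the map $y \mapsto \Phi(y,z)$ then parameterizes $\Sigma_z \cap W$, so that the induced surface measure reads $d\sigma_{\Sigma_z} = \sqrt{\det g(y,z)}\,dy$ with $g_{ij} = \partial_{y_i}\Phi \cdot \partial_{y_j}\Phi$.

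The crux of the proof is the algebraic identity
\begin{equation*}
|\det D\Phi| \;=\; \sqrt{\det g}\,/\sqrt{\det G},
\end{equation*}
from which the co-area formula follows at once from the ordinary change-of-variables theorem. Write $D\Phi = [A \,|\, B]$, where $A$ (resp.\ $B$) is the $n\times(n-p)$ (resp.\ $n\times p$) block formed by the columns $\partial_{y_i}\Phi$ (resp.\ $\partial_{z_j}\Phi$), and let $N := (\nabla\xi)\circ\Phi$ denote the $p\times n$ matrix whose rows span the normal space to $\Sigma_z$ and satisfy $NN^T = G$. Differentiating $\xi\circ\Phi = z$ yields $NA = 0$ and $NB = I_p$. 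Decompose $B = B_T + B_N$ into its tangential and normal parts relative to $\Sigma_z$: the tangential part $B_T$ lies in the column span of $A$ and leaves the determinant of $[A\,|\,B]$ unchanged, while the columns of $A$ and of $B_N$ are mutually orthogonal, so
\begin{equation*}
|\det D\Phi|^2 \;=\; \det(A^T A)\,\det(B_N^T B_N) \;=\; \det g \cdot \det(B_N^T B_N).
\end{equation*}
Since the normal space coincides with the row span of $N$, one may write $B_N = N^T C$ for some $p\times p$ matrix $C$; the relation $NB = I_p$ then reduces to $GC = I_p$, hence $C = G^{-1}$, and
\begin{equation*}
B_N^T B_N \;=\; G^{-1} N N^T G^{-1} \;=\; G^{-1},
\end{equation*}
which yields the desired Jacobian identity.

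The local change of variables through $\Phi$ now gives
\begin{equation*}
\int_W \phi(x)\sqrt{\det G(x)}\,dx \;=\; \int_V \int_U \phi(\Phi(y,z))\sqrt{\det g(y,z)}\, dy\, dz \;=\; \int_V \int_{\Sigma_z \cap W} \phi\, d\sigma_{\Sigma_z}\, dz,
\end{equation*}
which is the co-area formula restricted to the chart. A smooth partition of unity subordinate to a locally finite cover of $\R^n$ by such charts, combined with linearity in $\phi$, then patches these local identities into the global statement~(\ref{eq:co-area_p}); the case $p=1$ is the direct specialization $\sqrt{\det G} = |\nabla\xi|$ giving~(\ref{eq:co-area}). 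The single genuinely nontrivial step is the matrix computation of $|\det D\Phi|$ above; everything else (existence of the adapted chart, expression of the induced surface element, ordinary change of variables, partition of unity) is standard, which is exactly why the proof is ``simpler in the case of a smooth $\xi$'' as noted in the text.
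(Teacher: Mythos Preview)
Your proof is correct and is precisely the ``adequate parameterization and a simple change of variables'' that the paper alludes to but does not spell out; the paper gives no detailed proof of this lemma, only the one-line remark that in the smooth case the argument reduces to exactly what you wrote. The Jacobian identity $|\det D\Phi|^2 = \det g / \det G$ via the orthogonal decomposition $B = B_T + B_N$ and the computation $B_N^T B_N = G^{-1}$ is the clean way to carry this out, and everything else (implicit function theorem, partition of unity) is routine as you note.
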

\begin{remark}
This formula shows that if the random variable $X$ has law
$\psi(x)\, dx$ in~$\R^n$, then $\xi(X)$ has law
$$\int_{\Sigma_z} \psi \, (\det G)^{-1/2} \,d\sigma_{\Sigma_z} \,dz,$$ and
the law of $X$ conditioned to a fixed value $z$ of $\xi(X)$ is
$$d\mu_{z}=\frac{\psi \, (\det G)^{-1/2}
  \,d\sigma_{\Sigma_z}}{\int_{\Sigma_z} \psi \, (\det G)^{-1/2}
  \,d\sigma_{\Sigma_z}}.$$ Indeed, for any bounded functions $f$
and $g$,
\begin{align*}
\E(f(\xi(X))g(X))&=\int_{\R^n} f(\xi(x)) g(x) \psi(x)\, dx,\\
&=\int_{\R^p} \int_{\Sigma_z} f \circ \xi \, g\, \psi\, (\det G)^{-1/2} d
\sigma_{\Sigma_z} \, dz,\\
&=\int_{\R^p} f(z) \frac{\int_{\Sigma_z} g \, \psi\,(\det G)^{-1/2} d
\sigma_{\Sigma_z}}{\int_{\Sigma_z} \psi\,(\det G)^{-1/2} d
\sigma_{\Sigma_z}} \int_{\Sigma_z} \psi\,(\det G)^{-1/2} d
\sigma_{\Sigma_z} \, dz.
\end{align*}
The measure $(\det G)^{-1/2} d
\sigma_{\Sigma_z}$ is sometimes denoted by $\delta_{\xi(x)-z}$ in the literature.
\end{remark}

\section{Another possible set of assumptions for the convergence of the
  adaptive dynamics~(\ref{eq:X})--(\ref{eq:A_prime_t})}\label{sec:rieman}

It is also possible to state a result similar to Theorem~\ref{theo:CV}
for the dynamics~(\ref{eq:X})--(\ref{eq:A_prime_t}) under slightly different
assumptions than~[H2] and~[H3] by introducing
another Riemannian structure on $\Sigma_z$ (see~\cite{otto-villani-00})
than that induced by the scalar product of the ambient space~${\mathcal D}$. Let us introduce the following scalar product: $\forall x \in \Sigma_z$, $\forall u , v \in T_x \Sigma_{z}$,
\begin{equation}\label{eq:riem}
\langle u , v \rangle_{\Sigma_z}= u \cdot v |\nabla \xi|^{2}(x),
\end{equation}
where $\cdot$ denotes as before the scalar product of the ambient
space~${\mathcal D}$, and the associated norm: $\forall x \in \Sigma_z$, $\forall u \in T_x \Sigma_{z}$,
$$|u|_{\Sigma_z}^2=\langle u , u \rangle_{\Sigma_z}= |u|^2 |\nabla
\xi|^{2}(x).$$
Accordingly, the definition of the surface gradient is modified as
follows\footnote{With a slight abuse of notation, we still use the same
  notation $\nabla_{\Sigma_z}$ to denote the surface gradient, or
  $I(\mu_{t,z} | \mu_{\infty,z})$ to denote the Fisher information, or
  $d_{\Sigma_z}$ to denote the geodesic distance, or $\rho$ to denote
  the microscopic rate of convergence, while these are not
  the same as in the rest of the paper, since the Riemannian structure
  has been changed.} (compare with~(\ref{eq:grad_surf})): For $f: {\mathcal D} \to \R$,
\begin{equation}\label{eq:grad_surf_2}
\nabla_{\Sigma_z} f = |\nabla \xi|^{-2} P \nabla f.
\end{equation}
In particular, we have $|\nabla_{\Sigma_z} f|_{\Sigma_z} =|\nabla \xi|^{-1} |P \nabla f|$.

In this case, the Fisher information between the conditioned measures
$\mu_{t,z}$ and $\mu_{\infty,z}$ is (see~\cite{otto-villani-00}):
\begin{align*}
I(\mu_{t,z} | \mu_{\infty,z})
&=\int_{\Sigma_z}
\left|\nabla_{\Sigma_z} \ln\left( \frac{\psi(t,\cdot)}{\psi_\infty}
\right)\right|_{\Sigma_z}^2  \frac{\psi(t,\cdot) |\nabla
\xi|^{-1} d \sigma_{\Sigma_z}}{\psi^\xi(t,z)},\\
&=\int_{\Sigma_z}
\left |P \nabla \ln\left( \frac{\psi(t,\cdot)}{\psi_\infty}
\right)\right|^2   |\nabla \xi|^{-2} \frac{\psi(t,\cdot) |\nabla
\xi|^{-1} d \sigma_{\Sigma_z}}{\psi^\xi(t,z)},
\end{align*}
and the assumption~[H3] is stated in terms of this new
Fisher information:
\begin{equation*}\label{eq:hyp_LSI_prime}
\text{{\bf [H3']}~~~}
\left\{
\begin{array}{c}
\text{$V$ and $\xi$ are such that $\exists \rho >0$, for all $z \in
  {\mathcal M}$,}\\
\text{ the conditional measure $\mu_{\infty,z}$ satisfies LSI($\rho$),}\\
\text{ $\Sigma_z$ being endowed with the Riemannian structure~(\ref{eq:riem}).}
\end{array}
\right.
\end{equation*}

Using this Fisher information, Lemma~\ref{lem:estim_2} writes:
$$E_m(t) \leq \frac{1}{2 \rho} \int_{\mathcal D}  \left| P \nabla \ln\left( \frac{\psi(t,\cdot)}{\psi_\infty}\right)
\right|^2 |\nabla \xi|^{-2} \psi.$$
The definition for the Wasserstein distance is now stated using the
geodesic distance~$d_{\Sigma_z}$: $\forall x,y \in \Sigma_z$,
$$d_{\Sigma_z}(x,y)=\inf \left\{ \sqrt{\int_0^1
    |\dot{w}(t)|_{\Sigma_z}^2 \, dt} \,
  \Bigg| \,  w \in
  {\mathcal C}^1([0,1],\Sigma_z),\, w(0)=x,\, w(1)=y \right\}.$$
Thus, the estimate of Lemma~\ref{lem:estim_1} is changed to:
\begin{align*}
|A_t'(z)-A'(z)|
&= \left|\int_{\Sigma_z \times \Sigma_z} (F(x) - F(x')) \pi(dx,dx')
\right|,\\
& \leq \left\|  |\nabla \xi|^{-1} \left|P \nabla F\right|  \right\|_{L^\infty} \sqrt{\int_{\Sigma_z \times \Sigma_z} d_{\Sigma_z}(x,x')^2 \pi(dx,dx')},
\end{align*}
where $F$ is defined by~(\ref{eq:F}). Notice that
$$|\nabla \xi|^{-1} \left|P \nabla F \right|= \left| \nabla_{\Sigma_z} F\right|_{\Sigma_z}.
$$
Thus, assumption~[H2] is modified as:
\begin{equation*}\label{eq:hyp_V_prime}
\text{{\bf [H2']}~~~}
\left\{
\begin{array}{c}
\text{$V$ and $\xi$ are sufficiently differentiable functions such that} \\
\text{$\left\| \left| \nabla_{\Sigma_z} F \right|_{\Sigma_z}  \right\|_{L^\infty} \leq M<\infty$.}
\end{array}
\right.
\end{equation*}
The rest of the proof remains the same, and exponential convergence is
thus obtained, assumptions~[H2] and~[H3]
being respectively replaced by~[H2']
and~[H3']. With this set of assumptions, the rate of convergence is
$\lambda=\beta^{-1} \min( \rho,r)$.








\acknowledgement{This work is supported by the ANR INGEMOL of the
  French Ministry of Research. TL would like to thank Ch.~Chipot who initiated this
  work by a question about the ABF method. Part of this work was completed
  during a summer school of the GDR CHANT. We would like to
  thank F.~Castella for the organization of this school. We would like to thank C. Villani for pointing
  out~\cite{villani-06} to prove Lemma~\ref{lem:fisher_macro_R}.}


\begin{thebibliography}{10}

\bibitem{ambrosio-fusco-pallara-00}
L.~Ambrosio, N.~Fusco, and D.~Pallara.
\newblock {\em Functions of bounded variation and free discontinuity problems}.
\newblock Oxford science publications, 2000.

\bibitem{ABC-00}
C.~An\'e, S.~Blach\`ere, D.~Chafa\"i, P.~Foug\`eres, I.~Gentil, F.~Malrieu,
  C.~Roberto, and G.~Scheffer.
\newblock {\em Sur les inégalités de Sobolev logarithmiques}.
\newblock SMF, 2000.

\bibitem{arnold-markowich-toscani-unterreiter-01}
A.~Arnold, P.~Markowich, G.~Toscani, and A.~Unterreiter.
\newblock On convex {S}obolev inequalities and the rate of convergence to
  equilibrium for {F}okker-{P}lanck type equations.
\newblock {\em Comm. Part. Diff. Eq.}, 26:43--100, 2001.

\bibitem{bussi-laio-parinello-06}
G.~Bussi, A.~Laio, and M.~Parinello.
\newblock Equilibrium free energies from nonequilibrium metadynamics.
\newblock {\em Phys. Rev. Lett.}, 96:090601, 2006.

\bibitem{chipot-pohorille-07}
C.~Chipot and A.~Pohorille, editors.
\newblock {\em Free Energy Calculations}, volume~86 of {\em Springer Series in
  Chemical Physics}.
\newblock Springer, 2007.

\bibitem{ciccotti-lelievre-vanden-einjden-06}
G.~Ciccotti, T.~Leli{\`e}vre, and E.~Vanden-Eijnden.
\newblock Sampling {B}oltzmann-{G}ibbs distributions restricted on a manifold
  with diffusions: {A}pplication to free energy calculations.
\newblock {\em Commun. Pur. Appl. Math.}, 2006.
\newblock to appear.

\bibitem{darve-pohorille-01}
E.~Darve and A.~Porohille.
\newblock Calculating free energy using average forces.
\newblock {\em J. Chem. Phys.}, 115:9169--9183, 2001.

\bibitem{evans-gariepy-92}
L.C. Evans and R.F. Gariepy.
\newblock {\em Measure theory and fine properties of functions}.
\newblock Studies in Advanced Mathematics. CRC Press, 1992.

\bibitem{henin-chipot-04}
J.~H\'enin and C.~Chipot.
\newblock Overcoming free energy barriers using unconstrained molecular
  dynamics simulations.
\newblock {\em J. Chem. Phys.}, 121:2904--2914, 2004.

\bibitem{iannuzzi-laio-parrinello-03}
M.~Iannuzzi, A.~Laio, and M.~Parrinello.
\newblock Efficient exploration of reactive potential energy surfaces using
  {Car-Parrinello} molecular dynamics.
\newblock {\em Phys. Rev. Lett.}, 90(23):238302, 2003.

\bibitem{jarzynski-97}
C.~Jarzynski.
\newblock Equilibrium free energy differences from nonequilibrium measurements:
  {A} master equation approach.
\newblock {\em Phys. Rev. E}, 56(5):5018--5035, 1997.

\bibitem{lelievre-rousset-stoltz-07-a}
T.~Leli\`evre, M.~Rousset, and G.~Stoltz.
\newblock Computation of free energy differences through nonequilibrium
  stochastic dynamics: {T}he reaction coordinate case.
\newblock {\em J. Comput. Phys.}, 222(2):624--643, 2007.

\bibitem{lelievre-rousset-stoltz-07-b}
T.~Leli\`evre, M.~Rousset, and G.~Stoltz.
\newblock Computation of free energy profiles with adaptive parallel dynamics.
\newblock {\em J. Chem. Phys.}, 126:134111, 2007.

\bibitem{otto-reznikoff-07}
F.~Otto and M.G. Reznikoff.
\newblock A new criterion for the logarithmic {S}obolev inequality and two
  applications.
\newblock {\em J. Funct. Anal.}, 243:121--157, 2007.

\bibitem{otto-villani-00}
F.~Otto and C.~Villani.
\newblock Generalization of an inequality by {T}alagrand and links with the
  logarithmic {S}obolev inequality.
\newblock {\em J. Funct. Anal.}, 173(2):361--400, 2000.

\bibitem{villani-03}
C.~Villani.
\newblock {\em Topics in optimal transportation}, volume~58 of {\em Graduate
  Studies in Mathematics}.
\newblock American Mathematical Society, 2003.

\bibitem{villani-06}
C.~Villani.
\newblock Hypocoercivity.
\newblock Technical report, ENS Lyon, 2006.
\newblock available online at {\tt
  http://www.umpa.ens-lyon.fr/\~{}cvillani/Cedrix/pre.Hypoco.ps}.

\bibitem{wang-landau-01}
F.~Wang and D.P. Landau.
\newblock Determining the density of states for classical statistical models: A
  random walk algorithm to produce a flat histogram.
\newblock {\em Phys. Rev. E}, 64:056101, 2001.

\end{thebibliography}

\end{document}